\theoremstyle{plain}
\newtheorem{theorem}{Theorem}[section]
\newtheorem{lemma}[theorem]{Lemma}
\newtheorem{proposition}[theorem]{Proposition}
\newtheorem{corollary}[theorem]{Corollary}
\newtheorem{mainthm}{Theorem}
\theoremstyle{definition}
\newtheorem{remark}[theorem]{Remark}
\newtheorem{definition}[theorem]{Definition}
\newcommand{\teapot}{\Upsilon_2^{cp}}
\title{The shape of Thurston's Master Teapot }
\author{Harrison Bray, Diana Davis, Kathryn Lindsey and Chenxi Wu}
\newif\ifdraft\drafttrue
\newcounter{comhar}
\def\0{{\mathbf 0}}
\newcommand{\concat}{ \hspace{.15em}{\cdot}\hspace{.15em}}
\begin{document}
\maketitle
\begin{abstract}
We establish basic geometric and topological properties of Thurston's Master Teapot and the Thurston set for superattracting unimodal continuous self-maps of intervals.  In particular, the Master Teapot is connected, contains the unit cylinder, and its intersection with a set $\mathbb{D} \times \{c\}$ grows monotonically with $c$.  We show that the Thurston set described above is not equal to the Thurston set for postcritically finite tent maps, and we provide an arithmetic explanation for why certain gaps appear in plots of finite approximations of the Thurston set. 
\end{abstract}

\section{Introduction}

In his last paper, unfinished at the time of his death, William Thurston
studied piecewise-linear maps of the unit interval \cite{thurston}. One
concept mentioned in this paper is an object that Thurston, in his 2012 course at Cornell University, affectionately called
the {\em Master Teapot},
 and which can be defined as follows. 
A unimodal endomorphism $f$ of a real interval is said to be \emph{critically periodic}
 if the critical point is a fixed point of some forward iterate of $f$, and is said to be \emph{postcritically finite} if the forward orbit of the critical point is a finite set. 
If $f$ is postcritically finite, it is easy to see that the orbit of the critical point determines a Markov partition of the interval.  
The Perron-Frobenius theorem then implies that the exponential of the topological entropy of $f$, $e^{h_{top}(f)}$, is a weak Perron number - i.e. a real, positive algebraic integer that is not less than the absolute value of any of its Galois conjugates - which we call the 
the {\em growth rate} of $f$ and denote by $\lambda(f)$.
Denote by $\mathcal{F}^{cp}$ the family of critically periodic unimodal continuous self-maps of compact real intervals. 
Then {\em Thurston's Master Teapot} is the set
\[
  \Upsilon_2^{cp}:=\overline{ \{(z,\lambda) \in \mathbb{C} \times \mathbb{R} \mid \lambda = \lambda(f) \textrm{ for some }f \in \mathcal{F}^{cp}, z \textrm{ is a Galois conjugate of } \lambda \}}.
\]

An application of $\Upsilon_2^{cp}$ is that it can be used as a necessary condition for a weak Perron number to be the growth rate of a critically periodic unimodal map: $\beta$ being such a number would imply that for each of its Galois conjugates $z$, $(z, \beta)\in\Upsilon_2^{cp}$.  Studying the Master Teapot may also inform the open question of completely classifying the set of dilatations of pseudo-Anosov surface diffeomorphisms, which may be thought of as two-dimensional analogues of uniformly expanding interval self-maps.  We call the image of the projection of the Master Teapot to $\mathbb{C}$ the \emph{Thurston set}; this set, which we discuss later, has been the subject of several recent works (e.g. \cite{ckw, TiozzoGaloisConjugates, thompson}).  Another motivation for studying these sets is that the part of the Thurston set inside the unit disk may be viewed as an analogue of the Mandelbrot set -- while the Mandelbrot set may be defined as the set of parameters $c \in \mathbb{C}$ such that $0$ belongs to the filled Julia set of the polynomial 
$z \mapsto z^2+c$, 
the part of the Thurston set inside the unit disk coincides with the set of parameters $z \in \mathbb{D}$ for which $0$ belongs to the limit set of the iterated function system generated by the maps $x \mapsto zx+1, x \mapsto zx-1$. Furthermore, a forthcoming article by the last two authors will show that each horizontal slice of the Master Teapot is an analogue of the Mandelbrot set.  This topic is also connected to the theory of ``core entropy,'' which has been the subject of numerous recent works (see, e.g. \cite{TiozzoTopologicalEntropy,TiozzoContinuity,GaoYanTiozzo,DegreeDInvariantLaminations}), as the restriction of a real quadratic polynomial to its Hubbard tree is a unimodal interval self-map.  




\begin{figure}[!h] 
\begin{center}
\includegraphics[width=0.7\linewidth,trim={16cm 11cm 10cm 3cm},clip]{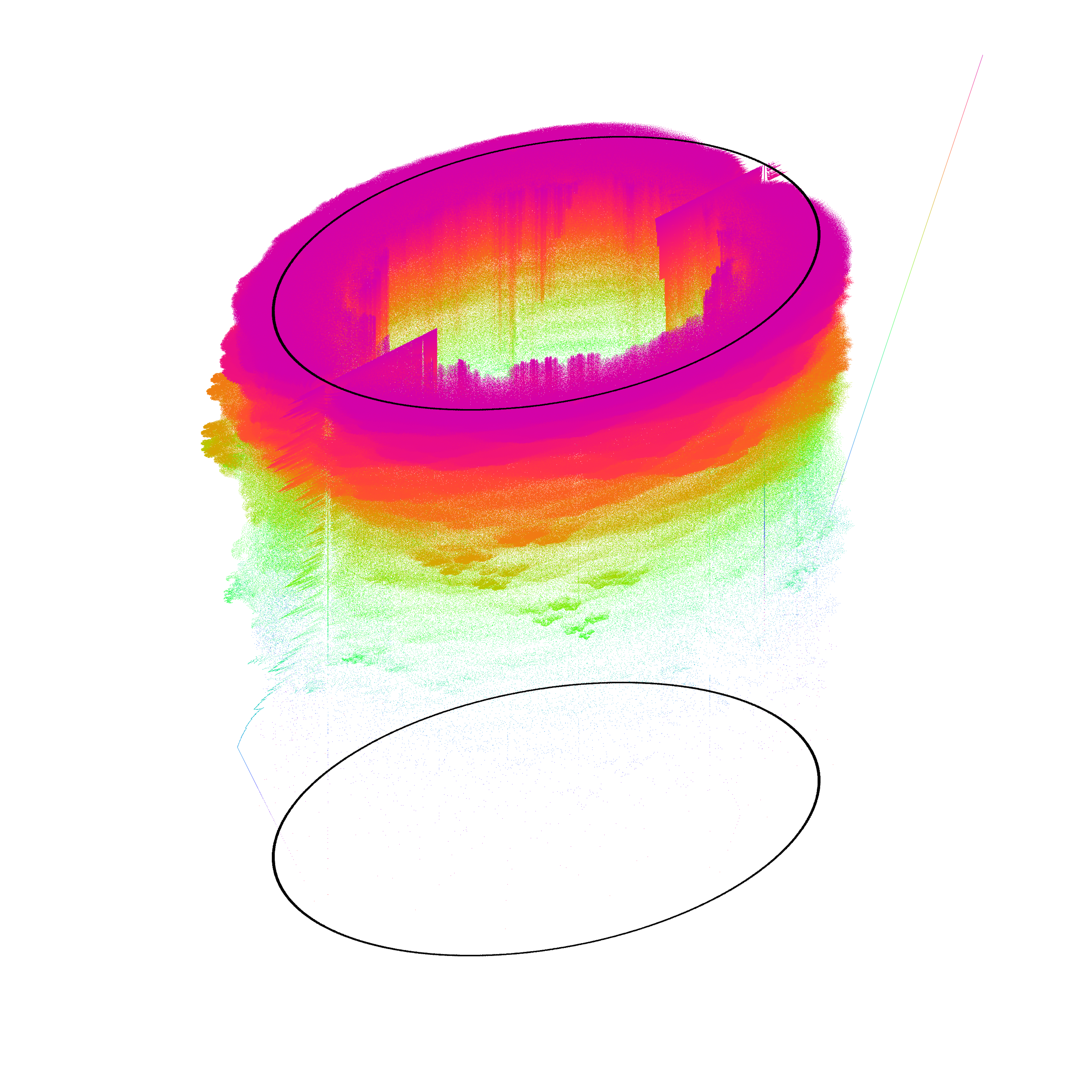}
\caption{A plot of a finite approximation of the Thurston Master Teapot, showing all points coming from maps in $\mathcal{F}^{cp}$ with critical period at most 23. The ``spout'' of the Teapot in the upper right corner is the line $\{(x,0,x) : x \in [1,2]  \subset \mathbb{R}^3 \simeq \mathbb{C} \times \mathbb{R}$, although the bottom of the spout is not visible in this finite approximation.   The ``handle'' of the Teapot in the lower left corner lies above the negative real axis.  The fact that the plot fades out towards the bottom of the Teapot is due to the finiteness of the approximation; considering maps with longer critical periods would give rise to more points near the bottom of the Teapot. The two black circles are the sets  $S^1 \times \{1\}$ and $S^1 \times \{2\}$, and the color gradients is according to the height of the points.
} 
\label{fig:gaps}
\end{center}
\end{figure}

Thurston describes the part of the Master Teapot $\Upsilon_2^{cp}$ \emph{outside} the unit cylinder as ``a network of very frizzy hairs, \ldots sometimes joining and splitting, but always transverse to the horizontal planes," \cite[Figure 7.7]{thurston} and the part \emph{inside} the unit cylinder as ``confined to (and dense in) closed sets that include the unit circle and increases [sic] monotonically with $\lambda$" \cite[Figure 7.8]{thurston}.
The first phenomenon is 
well-known (see e.g.  proof by Tiozzo \cite[proof of Theorem
1.3]{TiozzoGaloisConjugates}), but Thurston did not provide any further
explanation for the second. A main contribution in our paper is a proof of
the second phenomenon, which is that a point in the unit disc $\mathbb{D}$
which is on a horizontal slice of the Master Teapot persists as the height
of the slice increases. 
\footnote{To see this phenomenon in action -- that roots inside the unit cylinder persist, and also that roots outside the unit cylinder move continuously -- see our video \url{https://vimeo.com/259921275}.}

\begin{mainthm}[Persistence]
  \label{mainthm:closurepersistence}
  For any point $z\in\mathbb{C}$ in the open unit disk $\mathbb{D}$, if
  $(z,\beta)$ is in the Master Teapot, then every point above it up to
  height 2 is also in the Master Teapot. In other words,
  \[
    (z,\lambda) \in \Upsilon_2^{cp}\text{ implies }\{z \} \times
    [\lambda,2] \subset \Upsilon_2^{cp}.
  \]
\end{mainthm}

Two corollaries of this main theorem are the following:

\begin{mainthm}[Unit Cylinder] \label {mainthm:unitcylinder}
    The Master Teapot $\Upsilon_2^{cp}$ contains the unit cylinder $S^1 \times [1,2]$.
\end{mainthm}
Another equivalent way to state the Unit Cylinder Theorem
\ref{mainthm:unitcylinder} is that $S^1\times \{1\}$ is contained in the
Master Teapot, and the Persistence Theorem \ref{mainthm:closurepersistence}
holds on the closed unit cylinder.
  
\begin{mainthm}[Connectedness] \label{mainthm:connected}
	The Master Teapot $\Upsilon_2^{cp}$ is connected.  Furthermore, \mbox{$\Upsilon_2^{cp} \cap (\overline{\mathbb{D}} \times [1,2])$} is path-connected.  
\end{mainthm}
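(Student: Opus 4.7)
The plan is to glue the ``outside/cylinder'' part of $\Upsilon_2$ (which Tiozzo's theorem and Theorem \ref{mainthm:unitcylinder} show to be connected) to the ``inside'' part via paths supplied by the Persistence Theorem and the $n=0$ instance of Theorem \ref{mainthm:unitcylinder}. Specifically, let $\Upsilon_2^{\mathrm{out}} := \Upsilon_2 \cap ((\mathbb{C} \setminus \mathbb{D}) \times [1,2])$, which contains the unit cylinder $S^1 \times [1,2]$ by Theorem \ref{mainthm:unitcylinder}. Tiozzo's theorem gives that $\Upsilon_2^{\mathrm{out}}$ is connected, and the unit cylinder is itself path-connected. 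Both assertions of Theorem \ref{mainthm:connected} then reduce to a single claim: every point of $\Upsilon_2 \cap (\overline{\mathbb{D}} \times [1,2])$ can be joined, by a path lying in this same set, to a point of the unit cylinder.

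Fix $(z_0, \lambda_0) \in \Upsilon_2 \cap (\overline{\mathbb{D}} \times [1,2])$. If $|z_0| = 1$ we are already on the unit cylinder. If $|z_0| < 1$, then Persistence (Theorem \ref{mainthm:closurepersistence}) places the vertical segment $\{z_0\} \times [\lambda_0, 2]$ into $\Upsilon_2$, reducing the problem to producing a path from $(z_0, 2)$ to a point of $S^1 \times \{2\}$ inside the horizontal slice $\Upsilon_2 \cap (\overline{\mathbb{D}} \times \{2\})$. When $|z_0| \geq R^{-1}$, the $n=0$ instance of Theorem \ref{mainthm:unitcylinder} places the closed annulus $\{R^{-1} \leq |z| \leq 1\} \times \{2\}$ inside $\Upsilon_2$; this annulus is path-connected and meets the unit circle at $\lambda = 2$, so the desired path runs within it.

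The main obstacle is the inner regime $|z_0| < R^{-1}$ at the top slice $\lambda = 2$, where Theorem \ref{mainthm:unitcylinder} does not cover the needed region and Theorem \ref{mainthm:closurepersistence} only produces vertical paths. I would dispatch it by identifying the top slice $\Upsilon_2 \cap (\overline{\mathbb{D}} \times \{2\})$ more precisely: by Persistence, a point $z$ with $|z| < 1$ satisfies $(z,2) \in \Upsilon_2$ if and only if $(z,\lambda) \in \Upsilon_2$ for some $\lambda \in [1,2]$, so the horizontal projection of the top slice coincides with $\Omega_2 \cap \overline{\mathbb{D}}$. The remaining task is thus path-connectedness of $\Omega_2 \cap \overline{\mathbb{D}}$, and to handle it I would invoke a structural input on the Thurston set---most cleanly, the containment $\overline{\mathbb{D}} \subseteq \Omega_2$, which immediately identifies the top slice with the closed disk and makes path-connectedness automatic. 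Establishing this structural input (or a weaker variant sufficient for path-connectedness) is the technical heart of the argument; once it is in hand, the rest of the proof is the three-paragraph gluing described above.
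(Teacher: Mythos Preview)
Your overall architecture---connect each interior point vertically to the top slice via Persistence, then connect within the top slice to the unit cylinder---is exactly the paper's approach. The outside/cylinder piece via Tiozzo plus Theorem~\ref{mainthm:unitcylinder} is also handled the same way.

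The genuine problem is your proposed ``structural input'' $\overline{\mathbb{D}} \subseteq \Omega_2$: this is \emph{false}. The paper itself records (citing \cite[Lemma~2.4]{TiozzoGaloisConjugates}) that $\Omega_2$ has a hole of radius $1/2$ about the origin, and further notes (citing \cite{ckw}) that $\Omega_2 \cap \mathbb{D}$ has at least one additional hole. So the top slice $\Upsilon_2 \cap (\overline{\mathbb{D}} \times \{2\})$, which you correctly identify with $\Omega_2 \cap \overline{\mathbb{D}}$, is \emph{not} the closed disk, and your ``most clean'' route collapses. The weaker variant you gesture at---path-connectedness of $\Omega_2 \cap \overline{\mathbb{D}}$ itself---is exactly what is needed, but it is genuinely nontrivial given the holes, and you have not supplied an argument for it.

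The paper does not go through $\overline{\mathbb{D}} \subseteq \Omega_2$. Its proof simply asserts that Persistence yields connectedness of the interior piece, relying on the fact that every interior point connects vertically to the top slice and that the top slice together with the unit cylinder forms a connected set; the paper is admittedly terse on the last point. In any case, your reduction is sound up to the point where you invoke the false containment---that is where your argument breaks.
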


We also proved a number of results that are not logically dependent on the main theorem above and concern other sets related to the Master Teapot.
Let the {\em Thurston set}, which we denote $\Omega_2^{cp}$, be the
projection of Thurston's Master Teapot onto $\mathbb{C}$:
\[\Omega_2^{cp} :=\overline{ \{z \in \mathbb{C} \mid \lambda = \lambda(f) \textrm{ for some }f \in \mathcal{F}^{cp}, z \textrm{ is a Galois conjugate of } \lambda \}}.
\]
In other words, the Thurston set $\Omega_2^{cp}$ is the closure of the set
containing all Galois conjugates of growth rates of unimodal maps which are
critically periodic. 

A heretofore mysterious feature of plots of finite approximations of the Thurston set, formed by
bounding the length of the postcritical orbits, was the appearance of visible ``gaps'' or holes at
fourth roots of unity, sixth roots of unity, and certain other algebraic numbers. 
\begin{figure}[!hb]
\begin{center}
\includegraphics[width=\linewidth,trim={15cm 19cm 10cm 19cm},clip]{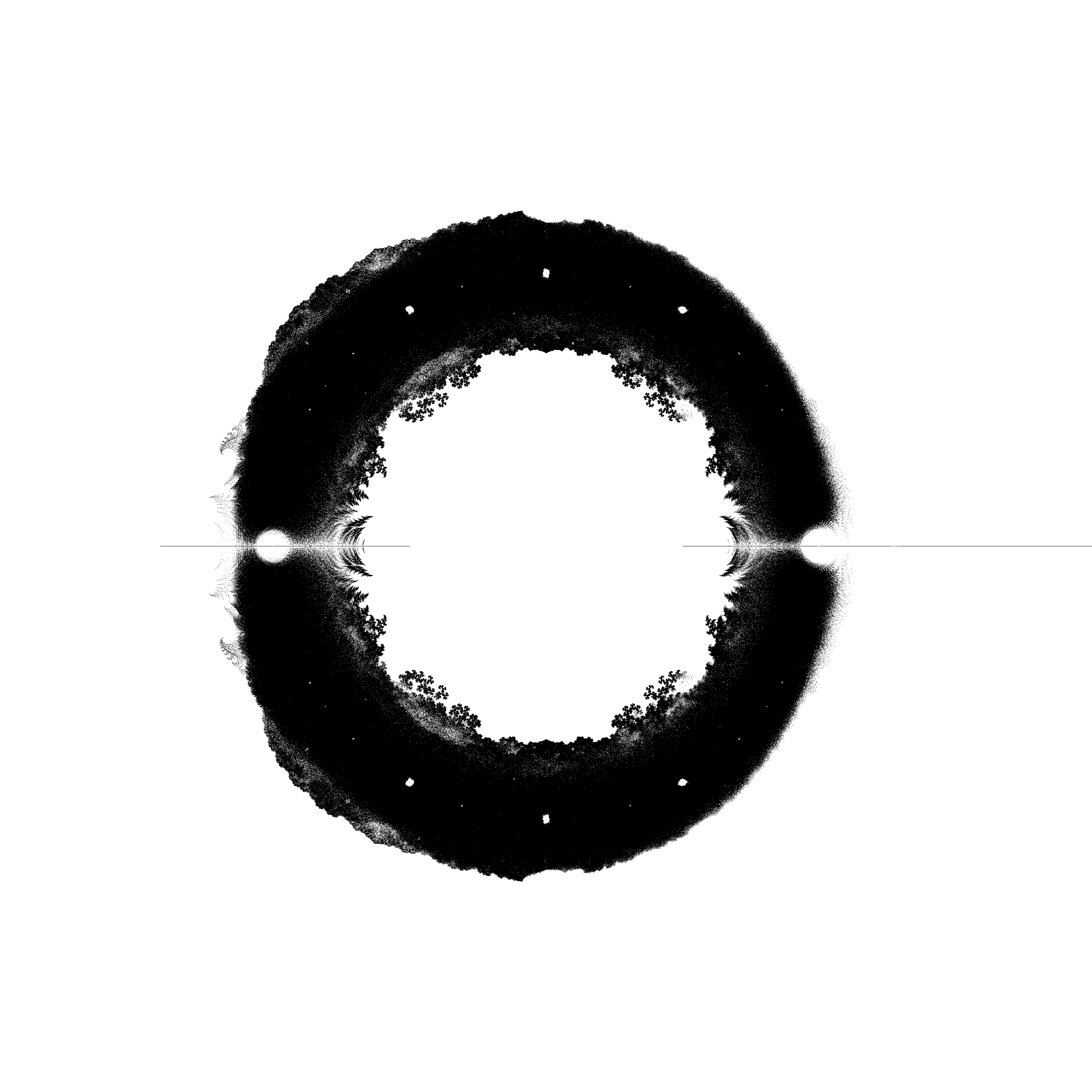}
\caption{An approximation of the Thurston set, $\Omega_2^{cp}$, consisting
of the roots of all minimal polynomials associated to postcritically finite
tent maps for which the post-critical period is at most 25}
\label{fig:thurston_set}
\end{center}
\end{figure}

The gaps on the unit circle get filled in as the length of the
postcritical orbits approaches infinity \cite[Proposition 6.1]{TiozzoGaloisConjugates}.  It is
known, however, that $\Omega_2^{cp}\color{black} \cap \mathbb{D}$ does have a hole other than the large central hole around the origin \cite{ckw}. The Gap Theorem~\ref{mainthm:gaps} provides an arithmetic explanation for these visible gaps in finite approximations of $\Omega_2^{cp}$.  

\begin{mainthm}[Gaps]
  \label{mainthm:gaps}
  For $n \in \mathbb{N}$, let  $\omega_n$ denote the set of Galois conjugates of growth rates of unimodal critically periodic maps with postcritical length at most $n$.   Let $R$ be one of the rings 
  $\mathbb{Z}[\sqrt{-D}]$ or $\mathbb{Z}[\frac{1+\sqrt{-3}}{2}]$ 
  for $D=1,2,$ or $5$, 
  and set $c = \inf\{|z| : z \in R, z \not = 0\}$.  
	Then for any $x \in R$, 
	$$B_{r(x)}(x)  \cap \omega_n \subset \{x\},$$
where 
$$r(x) = 
\begin{cases}
\min \left \{ \frac{c}{(2n^2 + 3n+1) |x|^n e},  \frac{1}{n+1} \right \} & \textrm{ if } |x| \geq 1,\\
\min \left\{ \frac{c}{(2n^2+3n+1) e}, \frac{1}{n+1} \right \}  & \textrm{ if } |x| \leq 1.\\
\end{cases}$$
\end{mainthm}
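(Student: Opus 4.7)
The key observation is that every $z \in \omega_n$ is a root of an integer polynomial of controlled degree and coefficient size. Specifically, for a superattracting tent map $T$ with postcritical length $\leq n$, the growth rate $\lambda = e^{h_{top}(T)}$ is a root of a polynomial $p_T \in \mathbb{Z}[t]$ whose degree is bounded linearly in $n$ and whose coefficients all lie in $\{-1, 0, 1\}$ (Milnor--Thurston kneading theory); every $z \in \omega_n$ is a Galois conjugate of some such $\lambda$, hence a root of the corresponding $p_T$. The strategy is to sandwich $|p_T(x)|$ between a lower bound coming from the arithmetic of $R$ and an upper bound coming from the mean value theorem, and to derive a contradiction whenever $z \in B_{r(x)}(x) \cap \omega_n$ with $z \neq x$.

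Fix such a $z$ and pick $T$ with $p_T(z) = 0$. Since $p_T \in \mathbb{Z}[t] \subset R[t]$ and $x \in R$, we have $p_T(x) \in R$. In the main subcase $p_T(x) \neq 0$, the definition of $c$ forces $|p_T(x)| \geq c$. On the other hand, $p_T(z) = 0$ and the mean value theorem give
\[ |p_T(x)| = |p_T(x) - p_T(z)| \leq |z - x| \cdot \sup_{t \in [z,x]} |p_T'(t)|, \]
where the segment $[z,x]$ lies in $\overline{B(x, 1/(n+1))}$ because $|z - x| \leq r(x) \leq 1/(n+1)$, so $|t| \leq M := |x| + 1/(n+1)$ on this segment. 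Estimating $|p_T'(t)| \leq \sum_{i \geq 1} i M^{i-1}$ (truncated at the degree) and using $(1 + 1/((n+1)|x|))^n \leq e$ when $|x| \geq 1$ (or the corresponding inequality when $|x| \leq 1$) to pass from $M^n$ to $|x|^n$ yields the supremum bound $(2n^2+3n+1)|x|^n e$ in the first regime and $(2n^2+3n+1)|x| e$ in the second. Combining with $|p_T(x)| \geq c$ forces $|z - x| \geq r(x)$, a contradiction.

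The remaining subcase $p_T(x) = 0$ is handled by factoring. Since $x \in R$ is a quadratic imaginary algebraic integer, its minimal polynomial $m_x$ has degree $\leq 2$ and divides $p_T$; write $p_T = m_x \cdot \tilde p$ with $\tilde p \in \mathbb{Z}[t]$. If $z$ is a root of $m_x$ with $z \neq x$, then $z = \bar x$, but $|z - x| = 2|\operatorname{Im}(x)| \geq \sqrt{D} > r(x)$, a contradiction; hence $z$ must be a root of $\tilde p$. Separability of tent map kneading polynomials (from the distinctness of periodic kneading sequences) gives $\tilde p(x) \neq 0$, and the argument of the previous paragraph applied to $\tilde p$ (with coefficient bounds controlled via Mignotte's inequality) closes the proof.

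The chief technical obstacle is pinning down the precise constant $(2n^2+3n+1)$ and the correct $|x|$-dependence in both regimes $|x| \geq 1$ and $|x| \leq 1$; this is routine but demands careful bookkeeping of $\sum_i i M^{i-1}$ together with the $(1 + 1/(n+1))^n \leq e$ inequality. A secondary point is ensuring separability of $p_T$ (or replacing $p_T$ by the minimal polynomial of $\lambda$ and invoking Mignotte's bound) so that the factorization step in the final case goes through cleanly.
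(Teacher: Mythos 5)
Your main line is essentially the paper's own proof (Theorem \ref{t:gaps}): pick an integer polynomial with explicitly bounded coefficients vanishing at the conjugate $z$, evaluate it at the lattice point $x$, bound the value below by $c$ using discreteness of $R$, and bound $|q(x)|=|q(x)-q(z)|$ above by a smallness estimate on the ball of radius $\tfrac{1}{n+1}$. The only differences are cosmetic: the paper works with the Parry polynomial $z^{n+1}-\sum a_i z^{n-i}-1$, $a_i\in\{0,\pm 2\}$, and expands $(x+\epsilon)^k-x^k$ by the binomial theorem instead of your derivative estimate along the segment (which, for complex polynomials, should be phrased as integrating $q'$ along $[x,z]$ rather than as the mean value theorem); the constant $(2n^2+3n+1)e=(n+1)(2n+1)e$ is exactly what that computation produces, so your worry about the bookkeeping is not an obstruction, and with your degree-at-most-$n$ polynomial with coefficients in $\{0,\pm1\}$ the regime $|x|\ge 1$ comes out even better.

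The one place you genuinely depart from the paper is the subcase $p_T(x)=0$, and there your argument has a hole. Kneading and Parry polynomials of superattracting tent maps are not known to be separable; they are typically reducible (the Parry polynomial always carries the factor $z-1$, for instance), and ``distinctness of periodic kneading sequences'' is a statement comparing different parameters, which says nothing about a repeated irreducible factor of the polynomial attached to a single parameter. So ``separability gives $\tilde p(x)\neq 0$'' is unjustified, and your fallback (replace $p_T$ by the minimal polynomial of $\lambda$ and invoke Mignotte) forfeits the $\{0,\pm1\}$ coefficient control on which the explicit radius $r(x)$ rests, so it cannot close the case with the stated constant. To be fair, the paper does not treat this case at all: its proof simply writes $c\le |1-x^{n+1}+a_0x^n+\cdots+a_n|$, tacitly assuming the Parry polynomial does not vanish at $x$ (note it always vanishes at $x=1$), so your instinct to isolate the subcase is sound even though your resolution of it is not. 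A smaller shared defect: in the regime $|x|\le 1$ the claimed supremum bound proportional to $|x|$ cannot hold when $|x|$ is very small (the derivative of $q$ does not vanish with $x$); there the cap $\tfrac{1}{n+1}$ in the definition of $r(x)$ must carry the argument, and the paper's displayed estimate for $|x|\le 1$ suffers from the same problem.
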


Let $\mathcal{F}^{pcf}$ be the family of unimodal postcritically finite self-maps of real intervals.  
We define the {\em postcritically finite Thurston set}, $\Omega_2^{pcf}$, as
\[\Omega_2^{pcf} :=\overline{ \{z \in \mathbb{C} \mid \lambda = \lambda(f) \textrm{ for some }f \in \mathcal{F}^{pcf}, z \textrm{ is a Galois conjugate of } \lambda \}}.
\]
In other words, the postcritically finite Thurston set $\Omega_2^{pcf}$ is
the closure of the set containing all Galois conjugates of growth rates of
unimodal maps which are postcritically finite. 

We proved that:

\begin{mainthm}[Two Thurston Sets] \label{mainthm:prepernotequal}
The Thurston set $\Omega_2^{cp}$ and the postcritically finite Thurston set $\Omega_2^{pcf}$ are not equal.
\end{mainthm}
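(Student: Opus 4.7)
The plan is to exhibit an explicit Galois conjugate of a PCF tent map growth rate that lies inside Tiozzo's hole of radius $\tfrac{1}{2}$ around the origin in $\Omega_2$ (\cite[Lemma~2.4]{TiozzoGaloisConjugates}). Tiozzo's hole is established from the observation that the kneading polynomial of a superattracting tent map has coefficients in $\{0,\pm 1\}$ with constant term $\pm 1$, which forces every Galois conjugate of the growth rate to satisfy $|z|\geq \tfrac{1}{2}$. By Thurston's theorem on entropy in dimension one, the set of PCF tent map growth rates is precisely the set of weak Perron numbers in $(1,2]$; this imposes no direct constraint on the coefficients of the minimal polynomial, and the resulting extra freedom is what will let $\Omega_2^{pre}$ reach inside the hole.

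The explicit candidate is $P(t)=t^3-3t-1$. The polynomial $P$ is irreducible over $\mathbb{Q}$ (it is a cubic with no rational root, since $P(\pm 1)\neq 0$), and by the standard trigonometric solution of the depressed cubic its three real roots are $2\cos(\pi/9)$, $2\cos(5\pi/9)$, and $2\cos(7\pi/9)$. The root $\lambda=2\cos(\pi/9)\approx 1.879$ lies in $(1,2]$ and has the largest modulus among the three, so $\lambda$ is a weak Perron number and therefore, by Thurston's theorem, a PCF tent map growth rate. Consequently every Galois conjugate of $\lambda$ lies in $\Omega_2^{pre}$. The conjugate $\mu=2\cos(5\pi/9)=-2\sin(\pi/18)$ satisfies $|\mu|=2\sin(\pi/18)<2\cdot \tfrac{\pi}{18}=\tfrac{\pi}{9}<\tfrac{1}{2}$, so $\mu$ lies strictly inside Tiozzo's hole and is excluded from $\Omega_2$. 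Combining, $\mu\in \Omega_2^{pre}\setminus \Omega_2$, which proves $\Omega_2\neq \Omega_2^{pre}$.

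The proof is almost entirely routine once the polynomial $P(t)=t^3-3t-1$ is in hand: irreducibility, the trigonometric expressions for the three roots, and the inequality $2\sin(\pi/18)<\tfrac{1}{2}$ all reduce to standard checks. The main conceptual content is identifying the correct mechanism of separation: Tiozzo's hole is driven by a coefficient constraint that holds only in the superattracting case, and an irreducible cubic with $|a_1|=3$ already violates this constraint strongly enough to produce a Galois conjugate of modulus less than $\tfrac{1}{2}$. The only real obstacle is recognizing which simple polynomial to try; once $P(t)=t^3-3t-1$ is chosen, the rest follows from elementary trigonometry together with the two main inputs cited above.
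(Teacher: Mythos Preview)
Your argument has a genuine gap at the step where you invoke ``Thurston's theorem on entropy in dimension one'' to conclude that $\lambda=2\cos(\pi/9)$ is the growth rate of a postcritically finite \emph{tent} map. Thurston's theorem says that every weak Perron number arises as $e^{h_{top}}$ for some postcritically finite interval self-map, but that map may have many turning points; it does not assert that the realizing map can be taken unimodal. The set $\Omega_2^{pre}$ in this paper is defined using tent maps only, so you must verify that the specific tent map $f_\lambda$ has finite postcritical orbit.

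In fact $f_\lambda$ is \emph{not} postcritically finite. Write the iterates $x_n=f_\lambda^n(1)\in\mathbb{Z}[\lambda]$ using $\lambda^3=3\lambda+1$; each step is $x_{n+1}=d_n+e_n\lambda x_n$ with $d_n\in\{0,2\}$, $e_n\in\{\pm1\}$. Applying the Galois automorphism $\sigma$ sending $\lambda$ to the conjugate $\nu=2\cos(7\pi/9)\approx-1.532$ gives $y_{n+1}=d_n+e_n\nu y_n$, so $|y_{n+1}|\geq|\nu|\,|y_n|-2$. One checks $y_0=1$, $y_1=2-\nu\approx 3.53$, $y_2=\nu y_1\approx-5.41$, and since $|y_2|>2/(|\nu|-1)\approx 3.76$ the sequence $|y_n|$ is strictly increasing thereafter and tends to infinity. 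Because $\{1,\lambda,\lambda^2\}$ is a $\mathbb{Q}$-basis, distinct coefficient vectors give distinct elements of $\mathbb{Z}[\lambda]$, so the orbit $\{x_n\}$ is infinite. Hence $\lambda$ is not a PCF tent map growth rate, and you have no reason to place $\mu=2\cos(5\pi/9)$ in $\Omega_2^{pre}$.

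The paper's proof avoids this issue entirely: it writes down an explicit strictly preperiodic itinerary $1000011100(101000)^\infty$, verifies admissibility directly via the combinatorial criterion, computes the resulting degree-$12$ irreducible factor, and then shows one of its roots $p$ (with $|p|\approx0.675>\tfrac12$, so Tiozzo's hole is irrelevant) fails an iterated-function-system test (Lemma~\ref{l:notinThurstonSetCriterion}) and hence lies outside $\Omega_2$. Your intended shortcut via the radius-$\tfrac12$ hole is appealing, but to make it work you would need to exhibit a genuine PCF tent map growth rate with a Galois conjugate of modulus below $\tfrac12$; that requires producing and verifying an actual (pre)periodic itinerary, not an appeal to Thurston's multimodal realization theorem.
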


\begin{figure}[h!]
  \centering
  \includegraphics[width=\linewidth,trim={15cm 19cm 10cm 19cm},clip]{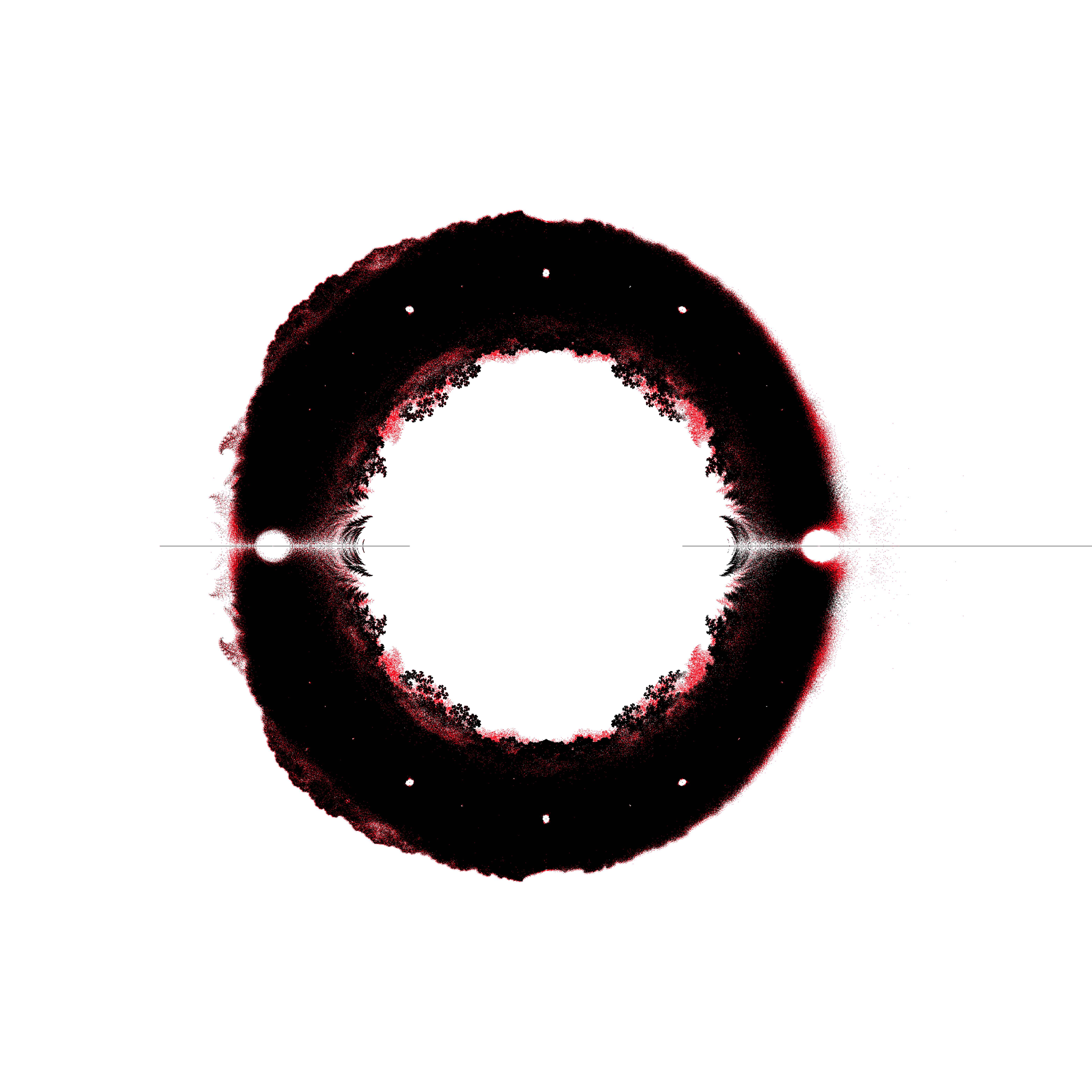}
  \caption{Here is an overlay of finite approximations of 
    the two Thurston sets: the black image contains points in
    $\Omega_2^{cp}$ corresponding to period length up to $25$, 
    and the red image contains points in
    $\Omega_2^{pcf}$ corresponding to preperiod plus period up to $22$.
    The set $\Omega_2^{pcf}$ is shown on its own in Figure \ref{fig:preperiodic}.}
  \label{fig:overlay}
\end{figure}

The caption of Thurston's image \cite[Figure 1.1]{thurston} states that the image shows the roots of
the defining polynomials for "a sample of about $10^7$ postcritically finite quadratic maps of
the interval with postcritical orbit of length $\leq 80$." We suspect, based on visual comparison of plots, that Thurston's image shows
only roots of critically periodic tent maps, i.e. shows $\Omega_2^{cp}$ and not $\Omega_2^{pcf}$ (c.f. Figure \ref{fig:overlay}).

At the moment, we do not have a good understanding of the shape of the postcritically finite Thurston set $\Omega_2^{pcf}$ and the 
analogously defined ``teapot'' $\Upsilon_2^{pcf}$; for example, we do not know if they exhibit persistence (as in the Persistence Theorem~\ref{mainthm:closurepersistence}) or connectivity (as in the Connectedness Theorem~\ref{mainthm:connected}).

  \subsection{Perspectives on the Thurston set}


 Our main tool for the study of $\Omega_2^{cp}$, and unimodal maps on intervals in general, is the Milnor-Thurston kneading theory. The Milnor-Thurston kneading theory \cite{MilnorThurston} (also cf. \cite{Guckenheimer}) provides the connection between general unimodal maps, real quadratic maps and subshifts in certain symbolic dynamical systems via entropy-preserving semi-conjugacies, and connect them to the study of infinite power series with prescribed coefficients called kneading determinants. As a result, there are numerous characterizations of $\Omega_2^{cp}$ from different points of view, and our results build (directly or indirectly) on a long history of research in each of these areas.
  
 1. \emph{Polynomials and power series with prescribed coefficients} An alternative way to describe the kneading determinant and kneading polynomials, which predates Milnor-Thurston kneading theory, is $\beta$-expansions and Parry polynomials, which were first introduced in \cite{parry60} for maps of the form $x \mapsto \beta x \mod{1}$ and later extended to a larger class of piecewise linear interval self-maps (e.g. \cite{gora, ItoSadahiro, DombekMP, Steiner,  IntermediateBetaShifts}). Solomyak \cite{solomyak} used Parry polynomial to study the closure of the Galois conjugates of $\beta$ such that $x \mapsto \beta x \mod{1}$ has finite critical orbit, Thompson \cite{thompson} used it to study a set that contains the Thurston set, and the distribution of roots of Parry polynomials was studied in \cite{VGJL,VGJL2}.
 More generally, there is a large body of literature that investigating the roots of polynomials and power series with all coefficients in a prescribed set (see, for example, \cite{OdlyzkoPoonen, BeaucoupEtAl, BorweinEtAlLittlewoodType, Konyagin, ShmerkinSolomyak, BorweinErdelyiLittmann}). The polynomials most closely related to the Thurston set are perhaps Littlewood, Newman and Borwein polynomials, polynomials whose coefficients belong to the sets $\{\pm 1\}$, $\{0,1\}$ and $\{-1,0,+1\}$ respectively.

 2. \emph{Complex dynamics.} Since the study of unimodal maps can be reduced to the study of real quadratic maps, the study of entropies of critically periodic unimodal maps is reduced to the study of core entropy on superattracting parameters on the real slice of the Mandelbrot set. The study of the core entropy on the Mandelbrot set is a rich subject, cf. \cite{douady1984etude, poirier2009critical, li2007monotonicity, meerkamp2013hausdorff, thurston2016rubber, TiozzoContinuity, TiozzoTopologicalEntropy}.

 3. \emph{Symbolic dynamics and Iterated function systems (IFS).} The kind of symbolic dynamical systems semiconjugate to a real quadratic map was described in \cite{MilnorThurston} via a combinatorial ``admissibility criteria''. Using this, Tiozzo \cite{TiozzoGaloisConjugates} proved that the Thurston set $\Omega_2^{cp}$ is connected, locally connected, and contains a uniform neighborhood of the unit circle. In particular, \cite{TiozzoGaloisConjugates} shows that a point $z$ with absolute value less than $1$ is in the Thuston set $\Omega_2^{cp}$ if and only if $0$ is in the limit set of the iterated function system generated by the two maps $x \mapsto zx+1$ and $x \mapsto zx -1$. This and some other related IFS are the focus of numerous works, including 
\cite{BarnsleyHarrington, BouschPaires, BouschConnexite, Bandt, SolomyakXu,
SoloymakLocalGeom, SolomyakAsymptotic}.  In \cite{ckw}, Calegari, Koch
and Walker used this and a related IFS to prove that the Thurston set has a hole, in addition to the obvious, large hole of radius $1/2$ centered at $0$. 

\subsection{Structure of the paper} \ 
A major consequence of the Milnor-Thurston theory is that unimodal maps on
intervals are semiconjugate to {\em tent maps} with the same entropy. This
tool is essential in our method of proof. 
 
{\bf  \S\ref{s:preliminaries}: Preliminaries} 
We define the \emph{$\beta$-itinerary} of a point under a tent map, 
\emph{Parry polynomials}, and give the \emph{admissibility criterion} for itineraries, which are key tools in our arguments.

{\bf \S\ref{sec:connections}:
Quadractic maps, iterated function systems, and renormalization
}
provides background on Milnor-Thurston kneading theory and reviews the
concept of \emph{renormalization}. 

{\bf  \S\ref{sec:auxiliary}: Dominant words} reviews the definition and
properties of dominant words from Tiozzo's work
\cite{TiozzoTopologicalEntropy}. 
 
%
 
 {\bf\S\ref{sec:persistence}: Persistence} proves the main theorem, Persistence Theorem~\ref{mainthm:closurepersistence}.

 {\bf\S\ref{sec:cylinder}: The unit cylinder and connectivity} shows that
 the Master Teapot is connected inside the unit cylinder, and uses this
 structure to prove the Unit Cylinder Theorem~\ref{mainthm:unitcylinder}
 and the Connectedness Theorem \ref{mainthm:connected}. 

 {\bf\S\ref{sec:gaps}: Gaps in the Thurston set} explains why there appear to be ``holes'' near primitive roots of unity in the finite approximations of the Thurston set. We show that these holes are associated to discrete subgroups, proving the Gap Theorem~\ref{mainthm:gaps}. 

{\bf  \S\ref{sec:preperiodic}: $\Omega_2^{cp}$ and $\Omega_2^{pcf}$ are not
equal} shows that the periodic and preperiodic Thurston sets are not equal,
proving the Two Thurston Sets Theorem~\ref{mainthm:prepernotequal}.

\subsection{Acknowledgements}
The authors gratefully acknowledge Giulio Tiozzo, Daniel Thompson, Sarah Koch, and Dylan Thurston for helpful conversations.  This work began at the AMS Mathematics Research Communities program in June 2017. The authors are immensely grateful to the MRC program for stimulating this collaboration, and to Daniel Thompson introducing us to this subject while at the MRC.  This material is based upon work supported by the National Science Foundation under Grant Number DMS 1641020. The first author was supported in part by NSF RTG grant 1045119.  The third author was supported in part by NSF DMS grants 1901247 and 1401133.

\section{Preliminaries}\label{s:preliminaries}


\begin{theorem} \cite[Theorem 7.4]{MilnorThurston}
  \label{thm:MTsemiconjugacy}
  Every continuous self-map $g$ of an interval with finitely many turning
  points and with $h_{top}(g) > 0$ is semi-conjugate to a uniform
  $\lambda$-expander $PL(g)$ with the same topological entropy $h_{top}(g) =
  \log \lambda$. If $g$ is postcritically finite, so is $PL(g)$.
\end{theorem}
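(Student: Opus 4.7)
The plan is to reproduce the Milnor--Thurston kneading-theoretic construction, building the semi-conjugacy directly from the lap-counting data of the iterates $g^n$.

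First I would set $\lambda = e^{h_{top}(g)}$ and invoke the classical fact that, for piecewise monotone continuous self-maps of an interval, the topological entropy equals the exponential growth rate of the lap number $\ell(g^n)$; this gives meaning to the normalization by $\lambda^n$ below. Fix an endpoint $a$ of $I$. For each $x \in I$ and $n \geq 0$, let $L_n(x)$ denote the signed lap count of $g^n$ restricted to $[a,x]$, where the sign of a lap records whether $g^n$ preserves or reverses orientation there. The core analytic step is to show that an appropriate normalization of the sequence $L_n(x)$ --- either $\lambda^{-n} L_n(x)$ directly, or a Ces\`aro or generating-function average of it --- converges as $n \to \infty$ to a continuous, weakly monotone limit $h : I \to \mathbb{R}$.

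Once $h$ is in hand, the semi-conjugacy falls out of the recursion $L_{n+1}(x) = \sum_{y \in g^{-1}(x)} \pm L_n(y) + O(1)$, which upon rescaling by $\lambda^{-(n+1)}$ and taking $n \to \infty$ becomes the local identity $h(g(x)) = \pm \lambda \, h(x) + c$ on each lap of $g$, with the sign and constant determined by the lap. Collapsing plateaus of $h$ produces a compact interval $J = h(I)$ and a well-defined continuous induced map $PL(g) : J \to J$ whose graph has every slope equal to $\pm \lambda$; that is, $PL(g)$ is a uniform $\lambda$-expander. Since $h$ is a semi-conjugacy, $h_{top}(PL(g)) \leq h_{top}(g) = \log \lambda$, and uniform slope $\lambda$ gives the matching lower bound, so the topological entropies agree.

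For the postcritical-finiteness clause, the turning points of $PL(g)$ in the interior of $J$ are exactly the $h$-images of the turning points of $g$, and the semi-conjugacy intertwines forward orbits; hence the postcritical set of $PL(g)$ is the $h$-image of the postcritical set of $g$, and finiteness of the latter passes immediately to the former. The main obstacle is the analytic heart of the argument: proving convergence of $\lambda^{-n} L_n(x)$ with enough uniformity in $x$ to yield a continuous, nondecreasing limit. This is where the Milnor--Thurston kneading determinant enters, encoding $L_n(x)$ as coefficients of an explicit rational generating function in a variable $t$ whose radius of convergence is $1/\lambda$, with a simple pole at $t = 1/\lambda$ supplying the required exponential convergence rate.
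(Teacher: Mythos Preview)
The paper does not supply its own proof of this statement: it is quoted verbatim as \cite[Theorem 7.4]{MilnorThurston} in the introduction as background, with no argument given. So there is nothing in the paper to compare your proposal against.

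That said, your sketch is recognizably the original Milnor--Thurston construction, and the overall architecture is right: build $h$ as a rescaled limit of lap-counting data, read off the affine relation $h\circ g=\pm\lambda h+c$ on each lap, and deduce postcritical finiteness from the fact that $h$ carries turning points to turning points and intertwines orbits. Two places deserve a bit more care if you flesh this out. First, the recursion you wrote, $L_{n+1}(x)=\sum_{y\in g^{-1}(x)}\pm L_n(y)+O(1)$, is not quite the identity Milnor--Thurston use; the lap count of $g^{n+1}$ on $[a,x]$ is more naturally expressed by summing the lap counts of $g$ over the laps of $g^n|_{[a,x]}$ (or, dually, via the kneading matrix acting on the vector of lap generating functions), and getting this straight is what makes the limiting relation $h(g(x))=\pm\lambda h(x)+c$ come out cleanly. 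Second, your claim that the turning points of $PL(g)$ are \emph{exactly} the $h$-images of those of $g$ needs the caveat that $h$ may collapse some turning points into plateaus; what survives is that the postcritical set of $PL(g)$ is contained in the $h$-image of the postcritical set of $g$, which is all you need for the finiteness conclusion.
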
 

 Thus,
to understand Thurston's Master Teapot, it will suffice to study these more
rigid dynamical system. 

\subsection{Tent maps}


Denote the unit interval by $I=[0, 1]$. For fixed $\beta\in (1, 2]$, the {\em
tent map} of slope $\beta$ is the continuous, piecewise linear map
$f_\beta$ of the unit interval $I$ defined by:
 
\[
  f_\beta= \begin{cases} \beta x & x\leq {1\over\beta}\\ 2-\beta x &
    x>{1\over\beta}\end{cases}.
\]

For a continuous self-map $f$ of an interval with finitely many turning
points, the topological entropy $h(f)$ is equal to the following
limit:
\begin{equation} \label{eq:arclengthentropy}
  h(f) = \lim_{n \to \infty} \frac{1}{n} \log(\textrm{Var}(f^n)),
\end{equation} 
where $\textrm{Var}(f)$ denotes the total variation of $f$ \cite{MS}. Then
a straightfoward calculation confirms that for a tent map $f_\beta$, the
{\em growth rate}, which is the exponential of the topological entropy, is
equal to $\beta$. In other words, $e^{h(f_\beta)}=\beta$. 

Via Milnor-Thurston's entropy-preserving semi-conjugacy
\cite{MilnorThurston}, the critical point of a unimodal map is sent to the
unique preimage of $1$ under the associated tent map. Then a tent map
$f_\beta$ is said to be {\em postcritically finite} if the $f_\beta$-orbit
of $1$ is finite, and $f_\beta$ is said to be {\em critically periodic} if $1$ is a
periodic point for $f_\beta$. Now we have alternative interpretations of 
Thurston's Master Teapot, the Thurston set, and the postcritically finite
Thurston set: 
\begin{itemize}
  \item The Master Teapot $\Upsilon_2^{cp}$ is the closure of the
    set of all pairs $(z,\beta)$ in $\mathbb C\times\mathbb R$ for which
    $z$ is a Galois conjugate of $\beta$, and $\beta$ is 
    the growth rate of a critically periodic tent map;
  \item the Thurston set $\Omega_2^{cp}$ is the closure of the set of all
    Galois conjugates of growth rates of critically periodic tent maps;
  \item the postcritically finite Thurston set $\Omega_2^{pcf}$ is the
    closure of the set of all Galois conjugates of growth rates of postcritically
    finite tent maps. 
\end{itemize}

\subsection{Combinatorial itineraries}

The dynamics of the tent map $f_\beta$ can be represented by a relatively
simple Markov coding: there is a Markov partition of the unit interval into
two subintervals labeled 0 and 1, and we represent the $f_\beta$-orbit of
any point $x$ with an itinerary sequence whose $n$-th term is 0 or 1
depending on which subinterval contains the $n$th iterate of $x$.  We will
make this representation more precise later in this section, since we will
extensively use the Markov coding of a tent map in this work.  First, we
will define essential abstract data of sequences and words in the alphabet
$\{0,1\}$. 

\begin{definition}
  \label{def:words_strings_seqs} 
  We will use the term \emph{string} to refer to an ordered list of letters
  in some alphabet, and this list may be either finite or infinite. We
  adopt the convention that a \emph{word} is always a finite string, and a
  \emph{sequence} is always an infinite string. An itinerary is also
  assumed to be an infinite string. We often concatenate a word $w$ with
  the notation $w^n$, which is the word created from repeating $w$ exactly
  $n$ times. Similarly, $w^\infty$ is the sequence created by repeating
  $w$ infinitely many times. 
\end{definition}

\begin{definition}
  The sequence of \emph{signs} associated to a sequence $w=(w_1 w_2 \dots)
  \in \{0, 1\}^{\mathbb{N}}$ is the sequence $e_w:\mathbb{N} \to \{-1,+1\}$
  defined by 

  $$e_w(j) = \left\{
    \begin{array}[]{ll}
      +1 & \text{ if }w_j=0, \\
      -1 & \text{ if }w_j =1.
    \end{array}
    \right.
    $$
    The sequence of {\em cumulative signs} associated to a sequence $w=(w_1
    w_2 \dots)\in \{0, 1\}^{\mathbb{N}}$ is the sequence $s_w \colon
    \mathbb{N} \to  \{+1,-1\}$ defined by
    $s_w(1)=1$ and
    \begin{equation} \label{eq:cumulativesigndef}
      s_w(j+1)=\prod_{k=1}^{j} e_w(k)
    \end{equation}
    for $j\geq1$. In other words, the $(k+1)^{\text{st}}$ sign $s_w(k+1)$ is equal to 1 if
    and only if the sum of the first $k$ entries of the sequence $w$ is even. 
    If $w$ is a finite string, the {\em cumulative sign} of $w$ is defined
    as $\prod_k e_w(k)$.
%
    The sequence of \emph{digits} associated to a sequence $w=(w_1 w_2
    \dots)\in \{0, 1\}^{\mathbb{N}}$ is the sequence $d_w:\mathbb{N} \to
    \{0,2\}$ defined by $d_w(i) = 2w_i$.  
\end{definition}

\subsection{Ordering on the set of strings} \label{sec:admissibility}

\begin{definition}[Twisted lexicographic ordering] 
\label{def:twistedlexicographic} 
{\color{white} for formatting only}

\begin{enumerate}
 \item 
Define the ordering $\leq_E$ on the set of sequences in  $\{0,1\}^{\mathbb{N}}$ as follows.  
Given two distinct sequences
$w=(w_1w_2\dots)$ and $v=(v_1v_2\dots)$ in $\{0,1\}^{\mathbb{N}}$, define
$w <_E v$ if and only if at the first integer $n$ such that $w_n\neq v_n$, 
\[\left\{
\begin{array}{ll}
	w_n<v_n &\text{ if } s_w(n)=+1,\\
    w_{n}>v_{n} &\text{ if } s_w(n)=-1.
\end{array}
\right.
\]
Note that $s_w(n)=s_v(n)$ by definition since $n$ is the first index at
which the sequences $w$ and $v$ differ. 
\item Define the ordering $\leq_E$ on the set of words in the alphabet $\{0,1\}$ as follows.  Given two words $w$ and $v$, write 
$w <_E v$ if and only if $w^\infty <_E v^\infty$.  
\end{enumerate}
\end{definition}


\begin{remark}
  It is straightforward to check from the definition of twisted
  lexicographic ordering that if a word
  $a$ has positive cumulative sign, then for any strings $v,w$, we have
  $w<_E v$ if and only if $aw<_E av$. Similarly, if $a$ has negative
  cumulative sign, then $w<_E v$ if and only if $aw>_Eav$. 
  \label{rem:flippingsigns}
\end{remark}

Now we can define the concept of $\beta$-itinerary as below:

\begin{definition}[$\beta$-itinerary]
  Let $I_0^\beta=[0, 1/\beta]$, $I_1^\beta=[1/\beta, 1]$. The
  $\beta$-itinerary of the tent map $f_\beta$ is the sequence
  $w=(w_1w_2\dots)$ satisfying the following two conditions:
  \begin{enumerate}
  \item $f^n(1)\in I_{w_{n+1}}^\beta$.
  \item Among all the sequences satisfying the preceding condition
    (1), $w$ is
    the minimal such sequence under the twisted lexicographical ordering. 
  \end{enumerate}
\end{definition}

It is obvious that if $f_\beta$ is not critically periodic, there is a unique sequence satisfying condition (1) which has to be the $\beta$-itinerary. If $f_\beta$ is critically periodic, one can easily check that the $\beta$-itinerary can be equivalently defined explicitly as follows: if $f_\beta^k(1)=1$
  and $k$ is minimal, then this is the itinerary $w^\infty$ where $w$ has
  length $k$ and the last digit of $w$ is chosen such that $w$ has positive
  cumulative sign. From this observation one can see that this definition is consistent with the standard kneading theory definition of the itinerary of $1$ under $f_\beta$, which is the limit of the itineraries of $x_i$ under $f_\beta$, where $x_i\in [0, 1]$, $\lim_ix_i=1$, and the forward orbit of $x_i$ never hits any critical point. 

\subsection{Parry polynomials}

The definition below for a Parry polynomial is motivated by the concept of $\beta$-expansion.

\begin{definition}  \label{def:ParryPolynomial} 

  Let $w$ be a word in the alphabet $\{0,1\}$. Set $f_0^z(x)=zx$ and
  $f_1^z(x)=2-zx$.  Then the \emph{Parry polynomial} for $w$ is
  \begin{align}\label{eqn:itinerary_of_x}
    \begin{split}
    P_w (z) & := s_w(p+1)(f_{w_p}^z\circ f_{w_{p-1}}^z\circ \dots \circ
    f_{w_1}^z(1)-1) \\
    & = z^p-s_w(1)d_w(1)z^{p-1}-\cdots-s_w(p)d_w(p) - s_w(p+1) \\
            & = (z-1)(z^{p-1}+s_w(2)z^{p-2}+\dots+s_w(p)).
            \end{split}
  \end{align}
\end{definition}

When $f_\beta$ is critically periodic, the first line of equation
\eqref{eqn:itinerary_of_x} confirms that for any word $w$ for which
$w^\infty$ is a $\beta$-itinerary, we have that $\beta$ is a root of the
Parry polynomial $P_w$.  
Thus, the minimal polynomial for $\beta$ is a factor of $P_{w}$ for any word
$w$ such that $w^\infty$ is a $\beta$-itinerary.  
As a final observation, $P_w$ is also never irreducible over the integers, 
as it always has a
factor of $(z-1)$. At times it will be important for our arguments to
ensure that the Parry polynomial has only this one extra factor of $(z-1)$,
i.e. has exactly two irreducible factors. 

\subsection{Admissible itineraries}


  


Let $\sigma\colon \{0,1\}^\mathbb{N} \to \{0,1\}^\mathbb{N}$ be the
standard {\em shift map}, defined by $\sigma(w_1w_2w_3\dots)= (w_2w_3\dots)$.  

Milnor-Thurston developed a combinatorial criterion for a sequence in
$\{0,1\}^{\mathbb N}$ to be realized as an itinerary of the critical
value under a 
quadratic map from the family $g_c\colon x\mapsto x^2+c$, where  
$c$ is real. A quadratic map is given an itinerary in the same procedure as
for a tent map; partition the domain of the map into two intervals whose
intersection is the critical point, and the left interval receives a coding
value of 1 while the right interval receives a coding value of 0.


\begin{theorem} \cite[Theorem 12.1]{MilnorThurston}
\label{t:admissible}
A sequence $a=(a_n)$ in $\{0,1\}^{\mathbb N}$ is an itinerary of the
critical value of a quadratic map if and only if 
$\sigma^j(a) \leq_E a$ for all $j \in \mathbb{N}$. 
\end{theorem}

As a corollary of Milnor-Thurston's semi-conjugacy from Theorem
\ref{thm:MTsemiconjugacy}, a sequence in
$\{0,1\}^{\mathbb N}$ which is realizable as an itinerary of 
of 1 under a tent map is also realizable as an itinerary of the critical
value of a quadratic map, as 1 is the image of the critical value under
semi-conjugacy. 
Thus, Theorem \ref{t:admissible} introduces a necessary combinatorial
condition on 
$\beta$-itineraries which we call admissibility:

\begin{definition}[Admissibility] \label{def:admissible}
  A sequence $a=(a_1a_2\ldots)$ in the alphabet $\{0,1\}$ is {\em admissible}
  in the Milnor-Thurston sense if for all positive integers $j$, the
  shifted sequence satisfies the inequality
  \[
    \sigma^j(a)\leq_E a. 
  \]
  Then a word $w$ is admissible if and only if the sequence $w^\infty$ is
  admissible. 
\end{definition}


On the other hand, the converse is more subtle because the Milnor-Thurston
semi-conjugacy is not a true conjugacy, the reason being that a critically
periodic tent map is semi-conjugate to infinitely many quadratic maps with
different post-critical itineraries. However we do have the partial
converse which will be sufficient for our purposes:

\begin{proposition}\label{p:min}
  Let $w$ be a word in the alphabet $\{0,1\}$ with positive cumulative
  sign.  If $w$ is admissible 
  and the Parry polynomial associated
  with $w$ can be factored into $z-1$ and another irreducible factor, then
  $w^\infty$ is the $\beta$-itinerary
  for some $\beta\in(1,2]$. 
\end{proposition}

\begin{proof}
Theorem 12.1 of \cite{MilnorThurston} tells us that if $w^\infty$ is
admissible, it must be the itinerary of $c$ under some quadratic map $g_c:
x\mapsto x^2+c$ (here we let $I_1=(-\infty, 0]$, $I_0=[0, \infty)$).
  Because a quadratic map is unimodal, we can find some tent map $f_\beta$
  semi-conjugate to the quadratic map $g_c$. Suppose $(w')^\infty$ is the
  $\beta$-itinerary, and $w'$ has minimal length. 
  The proof of Lemma 12.2 in \cite{MilnorThurston} implies (which can also
  be checked by bookkeeping) that the 
  itinerary of the critical value $c$ under any quadratic map $g_c$ 
  which is semi-conjugate to $f_\beta$ must lie between $(w')^\infty$ and
  $(w'')^\infty$, where $w''$ has the same length as $w'$ and
  agrees with $w'$ except for the last letter. Hence, $(w')^\infty\le_E
  w^\infty\le_E (w'')^\infty$, which implies that the length of $w$ must be
  a multiple of the length of $w'$.
  Because $\beta$ is a root of the Parry
  polynomial associated with $w$, $w'$ and $w''$, the fact that the Parry
  polynomial of $w$ has only two irreducible factors implies that $w$ and
  $w'$ have the same length. Hence $w=w'$ or $w=w''$. The condition that $w$
  has positive cumulative sign precludes $w=w''$, so $w=w'$. 
\end{proof}

  
Note that the converse of Proposition \ref{p:min} is false; if we write 
the $\beta$-itinerary as $w^\infty$, this string 
is admissible in the Milnor-Thurston sense, 
but the Parry polynomial $P_w$ may have more than two irreducible factors,
even if $w$ is minimal length and has positive cumulative sign. 

For a critically periodic tent map $f_\beta$, we call the Parry polynomial
associated with the $\beta$-itinerary {\em the Parry
polynomial of $f_\beta$} and denote it by $P_\beta$.  In the case that
$f_{\beta}$ is postcritically finite, 
a similar procedure using the sum of a power series produces a polynomial
associated to a preperiodic word, and hence to the preperiodic 
$\beta$-itinerary.

\subsection{Irreducibility} 


To check that a Parry polynomial has only two irreducible factors, we will
use two lemmas from  \cite{TiozzoGaloisConjugates}, which are derived from
Eisenstein's criterion.

\begin{lemma}  \cite[Lemma 4.1]{TiozzoGaloisConjugates} \label{l:tiozzo4point1}
Let $d=2^n - 1$ with $n \geq 1$, and choose a sequence $\epsilon_0,\epsilon_1,\dots,\epsilon_n$ with each $\epsilon_k \in \{\pm 1\}$ such that $\sum_{k=0}^d \epsilon_k \equiv 2 \bmod{4}$.  Then the polynomial 
$$f(x):= \epsilon_0 + \epsilon_1 x + \dots + \epsilon_d x^d$$
is irreducible in $\mathbb{Z}[x]$. 
\end{lemma}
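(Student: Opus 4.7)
My plan is to apply Eisenstein's criterion at the prime $p=2$ after the substitution $y = x-1$. First I will reduce $f(x)$ modulo $2$: since each $\epsilon_k \equiv 1 \pmod{2}$, we have $f(x) \equiv 1 + x + x^2 + \cdots + x^d \equiv (x^{d+1}-1)/(x-1) \pmod{2}$. Because $d+1 = 2^n$, the Frobenius/freshman's-dream identity $(x-1)^{2^n} \equiv x^{2^n} - 1 \pmod{2}$ gives $f(x) \equiv (x-1)^{2^n - 1} \equiv (x+1)^d \pmod{2}$.

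Now consider $F(y) := f(y+1) \in \mathbb{Z}[y]$. The previous paragraph shows $F(y) \equiv y^d \pmod{2}$, so every coefficient of $F$ other than the leading one is divisible by $2$, while the leading coefficient is $\epsilon_d = \pm 1$. The constant term of $F$ equals $f(1) = \sum_{k=0}^{d} \epsilon_k$, which by hypothesis is $\equiv 2 \pmod{4}$; in particular it is divisible by $2$ but not by $4$. Thus $F(y)$ meets Eisenstein's criterion at $p = 2$, so $F$ is irreducible in $\mathbb{Z}[y]$, and therefore so is $f(x) = F(x-1)$ in $\mathbb{Z}[x]$.

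The only real content is the mod-$2$ identity $f(x) \equiv (x+1)^d \pmod{2}$, which crucially uses $d+1 = 2^n$; this is what makes the shifted polynomial have constant-like mod-$2$ behavior and permits the Eisenstein argument. The hypothesis $\sum \epsilon_k \equiv 2 \pmod 4$ is then exactly the extra input needed to control the $4$-adic valuation of the constant term $f(1)$. I expect the small subtlety to be writing the Freshman's-dream step cleanly (iterating $(x-1)^2 \equiv x^2 - 1 \pmod 2$ a total of $n$ times, or equivalently invoking that $\Phi_{2^n}(x) \equiv (x-1)^{2^{n-1}} \pmod 2$), but this is routine and there is no serious obstacle.
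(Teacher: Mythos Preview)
Your proof is correct and follows exactly the approach indicated in the paper: the paper does not actually prove this lemma but cites it from \cite{TiozzoGaloisConjugates}, noting only that it is ``derived from Eisenstein's criterion.'' Your argument---reducing mod $2$ to get $f(x)\equiv (x-1)^d$ via the Frobenius identity $(x-1)^{2^n}\equiv x^{2^n}-1$, then shifting by $x\mapsto y+1$ so Eisenstein at $p=2$ applies with the constant term $f(1)=\sum\epsilon_k\equiv 2\pmod 4$---is precisely the standard proof, and all steps are clean.
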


\begin{lemma}\cite[Lemma 4.2]{TiozzoGaloisConjugates} \label{l:Tiozzo4.2}
Let $f(x) = 1 + \sum_{k=1}^d \epsilon_k x^k$ be a polynomial with $\epsilon_k \in \{\pm 1\}$ for all $1 \leq k \leq d$ and $\epsilon_k = -1$ for some $k$.  If $f(x)$ is irreducible in $\mathbb{Z}[x]$, then for all $n \geq 1$, the polynomial $f(x^{2^n})$ is irreducible in $\mathbb{Z}[x]$.  
\end{lemma}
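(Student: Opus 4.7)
The plan is to prove Lemma~\ref{l:Tiozzo4.2} by induction on $n$, with the inductive hypothesis that $F_{n-1}(x) := f(x^{2^{n-1}})$ is irreducible (setting $F_0 := f$ so that the base case uses only the given irreducibility of $f$). The engine is a mod-$4$ analysis of hypothetical factorizations.

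First, I would reduce any nontrivial factorization of $f(x^{2^n})$ in $\mathbb{Z}[x]$ to the form $f(x^{2^n}) = g(x) g(-x)$ for some $g \in \mathbb{Z}[x]$ with $g(0) = \pm 1$. Since $f(x^{2^n})$ is invariant under the involution $\sigma : x \mapsto -x$, its irreducible factors are either $\sigma$-fixed (up to sign) or come in conjugate pairs $\{h(x), h(-x)\}$. A $\sigma$-fixed irreducible factor must be a polynomial in $x^2$ (odd polynomials are excluded since $f(0) = 1$), and substituting $y = x^2$ shows such a factor pushes down to a factor of $F_{n-1}(y)$, which by the inductive hypothesis is forced to be trivial. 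Each conjugate pair $\{h(x), h(-x)\}$ contributes the product $h(x) h(-x)$, which is $\sigma$-fixed and therefore also descends to a factor of $F_{n-1}$, so irreducibility of $F_{n-1}$ forces exactly one such pair.

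Writing $g(x) = \sum_i a_i x^i$, the coefficient identity
\[
c_{2m} := [x^{2m}]\, g(x) g(-x) \;=\; (-1)^m a_m^2 + 2 \sum_{i=0}^{m-1} (-1)^i a_i a_{2m-i}
\]
combined with the fact that $[x^{2m}] f(x^{2^n}) = 0$ unless $m = k \cdot 2^{n-1}$, in which case it is $\pm 1$, shows via mod-$2$ reduction that $a_m$ is odd precisely when $m \in \{0, 2^{n-1}, 2 \cdot 2^{n-1}, \ldots, d \cdot 2^{n-1}\}$. For the \emph{inductive step} $n \geq 2$, the key is to examine $c_{2^{n-1}}$: since $2^{n-1}$ is not a multiple of $2^n$, we have $c_{2^{n-1}} = 0$. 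Setting $m = 2^{n-2}$ in the identity, $a_{2^{n-2}}$ is even so $(-1)^m a_m^2 \equiv 0 \pmod 4$; for each $1 \leq i \leq 2^{n-2} - 1$, both $a_i$ and $a_{2^{n-1} - i}$ are even, so the corresponding paired term is $\equiv 0 \pmod 8$; only $2 a_0 a_{2^{n-1}} \equiv 2 \pmod 4$ survives. Thus $c_{2^{n-1}} \equiv 2 \pmod 4$, contradicting $c_{2^{n-1}} = 0$.

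For the \emph{base case} $n = 1$, all $a_m$ for $0 \leq m \leq d$ are odd, so $a_m^2 \equiv 1 \pmod 8$ and each paired term with $2m - i \leq d$ contributes $\equiv 2 \pmod 4$. Counting such effective terms yields $\epsilon_m \equiv (-1)^m + 2 T_m \pmod 4$, with $T_m = m$ for $m \leq d/2$ and $T_m = d - m$ for $m > d/2$; this forces $\epsilon_m = 1$ for $m \leq d/2$ and $\epsilon_m = (-1)^d$ for $m > d/2$. If $d$ is even, all $\epsilon_m = 1$, contradicting the hypothesis that some $\epsilon_k = -1$; if $d \geq 3$ is odd, $f$ is forced to take the specific form $(1-x)\bigl(1 + x + \cdots + x^{(d-1)/2}\bigr)^2$, which is reducible, contradicting irreducibility of $f$. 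The main obstacle is the careful parity and mod-$4$ bookkeeping, in particular verifying the vanishing of intermediate sum terms to the required powers of $2$ in the inductive step; a secondary subtlety is identifying the specific reducible form that $f$ is forced into in the $d$-odd base case.
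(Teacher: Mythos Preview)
The paper does not prove this lemma; it is quoted from \cite[Lemma~4.2]{TiozzoGaloisConjugates} and used as a black box, so there is no in-paper argument to compare against. Your proposal supplies an independent proof, and the strategy is sound.

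The reduction step is correct: if $F_n = f(x^{2^n})$ is reducible while $F_{n-1}$ is irreducible, then $F_n$ is squarefree (its roots are the square roots of the distinct roots of $F_{n-1}$, and $F_{n-1}(0)=1$), so its irreducible factors over $\mathbb{Z}$ pair under $x\mapsto -x$; each pair product descends to a nontrivial factor of $F_{n-1}$, forcing a single pair and hence $F_n(x)=g(x)g(-x)$ with $g(0)^2=F_n(0)=1$. The mod-$4$ count at $m=2^{n-2}$ for $n\ge 2$ is clean and correct: only the $i=0$ term survives modulo $4$, giving $c_{2^{n-1}}\equiv 2\pmod 4$, contradicting $c_{2^{n-1}}=0$.

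Your base case $n=1$ is also correct for $d\ge 2$: the congruence $\epsilon_m\equiv (-1)^m+2\min(m,d-m)\pmod 4$ forces $\epsilon_m=1$ for $m\le d/2$ and $\epsilon_m=(-1)^d$ for $m>d/2$, so $f$ is either $1+x+\cdots+x^d$ (contradicting some $\epsilon_k=-1$) or, for $d$ odd, $(1-x)\bigl(1+x+\cdots+x^{(d-1)/2}\bigr)^2$, visibly reducible for $d\ge 3$. You should, however, flag $d=1$ explicitly: for $f(x)=1-x$ one has $f(x^2)=(1-x)(1+x)$, so the lemma as stated actually fails when $d=1$. This is a defect of the statement (or its transcription), not of your argument, and it is harmless for the paper's applications, which always have $d\ge 2$; but your write-up silently passes to ``$d\ge 3$ odd'' without saying why $d=1$ is omitted.
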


\section{Quadractic maps, iterated function systems, and renormalization}
\label{sec:connections}


In this section we elaborate on the connections to quadratic maps and
iterated function systems, including consequences of kneading theory for
the combinatorial itineraries we will study in this work. We close the
section with Proposition \ref{p:pers}, which is a sufficient criterion for
the Persistence Theorem \ref{mainthm:closurepersistence}.

\subsection{Iterated function system description of the Thurston set} \label{ss:IFSdescription}

Here we associate a limit set $\Lambda_z$ to a nonzero complex parameter
$z$ in the open unit disk $\mathbb D$. 
A point $z \in \mathbb{D} \setminus \{0\}$ defines a contracting iterated function system (IFS) generated by the two maps
$$f_0^z:x \mapsto zx+1, \quad f_1^z:x \mapsto zx-1.$$ 
The \emph{attractor} or \emph{limit set} $\Lambda_z$ of this IFS is defined
to be the unique fixed, nonempty, compact set $S \subset \mathbb{C}$ such
that $S = f_0^z(S) \cup g_1^z(S)$.  The existence and uniqueness of this
attractor and statements of some of its fundamental properties are due to Hutchinson \cite{Hutchinson}.  It is straightforward to see that:

\begin{lemma} \cite[Lemma 3.1.1]{ckw} \label{l:boundsonlimitset} 
  The limit set $\Lambda_z$ associated to 
  $z\in \mathbb D\setminus\{0\}$ is contained in the open ball of radius
  $\frac1{1-|z|}$ around the origin in the compex plane.
\end{lemma}

Our work is motivated by Tiozzo's description of $\Omega_2^{cp}\cap\mathbb{D}$ in \cite{TiozzoGaloisConjugates}, which is as follows:
\begin{theorem}\cite{TiozzoGaloisConjugates}\label{t:tiozzo}
  The Thurston set $\Omega_2^{cp}$ intersected with $\mathbb D$ is equal to
  the set of all compex numbers $z$ whose associated limit set $\Lambda_z$
  contains the origin. 
 \end{theorem}

Milnor and Thurston showed that any tent map $f_\beta$ is semiconjugate to
a quadratic map $g_c: x\mapsto x^2+c$ for $c\in [-2, 1/4]$, and this
semiconjugacy preserves the data of the Markov coding and hence the
entropy \cite{MilnorThurston}.
The {\em kneading series} of a quadratic map $g_c$ and a number $x$ is a power series
\[K(x, t) = 1 + \sum_{n=1}^{\infty}\eta_nt^n,\]

where $\eta_n(1)$ is the cumulative sign $\eta_n(x) = \prod_{i=0}^{n-1} sign(g_c^i(x))$.

The {\em kneading determinant} is
\[K_c(t) =  \begin{cases}
K(c,t) \textrm{ if the critical point is not periodic under }f_c, \\
\lim_{C \to c^+}K(C,t) \textrm{ if the critical point is periodic under }f_c\\
\end{cases}.\]

When $g_c$ has periodic critical orbit, $K_c(t)=\frac{P_{c, \text{knead}}(t)}{1-t^n}$, where $P_{c, \text{knead}}$ is called the {\em kneading polynomial}.

\begin{remark}\label{rem:knead-and-parry} The semiconjugacy between $f_\beta$ and $g_c$ sends the critical value $1$ to $c$, and intervals $I_o^\beta$ and $I_1^\beta$ to $[0, \infty)$ and $(-\infty, 0]$ respectively. Hence, $\eta_n(c)=s_w(n)$, which implies that:
\[(t-1)t^{n-1}P_{c, \text{knead}}(t^{-1})=P_\beta(t).\]
  \end{remark}

The following are classical from kneading theory; see \cite{MilnorThurston}:
\begin{proposition}\label{prop:quad}
  Fix a parameter $c$ in $[-2, 1/4]$ with associated quadratic map $g_c:
  x\mapsto x^2+c$ with entropy $h$, and let $s=e^h$ be the growth rate of
  this map. 
  \begin{enumerate}
  \item \cite[Theorem 13.1, Corollary 13.2]{MilnorThurston} 
    The growth rate 
    $s$ is a continuous function of $c$.
  \item \cite[Theorem 13.1, Corollary 13.2]{MilnorThurston} $s>\sqrt{2}$ if
    and only if $c<-1$.
  \item \cite[Theorem 6.3]{MilnorThurston} If $s>1$, $1/s$ is the smallest positive
    root of the kneading polynomial $P_{c, \text{knead}}$.
  \end{enumerate}\qed
\end{proposition}

Furthermore, 
Milnor and Thurston introduce an ordering on the additive group
$\mathbb{Z}[[t]]$ of formal power series with integer coefficients is
defined by setting $\alpha = a_0 + a_1t + \dots > 0$ whenever $a_0 = \dots
= a_{n-1} = 0$ but $a_n > 0$ for some $n \geq 0$ \cite{MilnorThurston}.
This induces an ordering on formal power series as follows: if $a,b$ are
distinct formal power series with integer coefficients, then 
$a>b$ if and only if $a-b>0$. 

The following is an immediate consequence of \cite[Section
13]{MilnorThurston}. We include proofs for completeness. 

\begin{lemma} \label{l:kneadingdetsgrowthrate}
For tent maps, the kneading determinant is a monotonically decreasing function of the growth rate.  
\end{lemma}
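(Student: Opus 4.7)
The natural definition of the kneading determinant of a tent map $f_\beta$ is the formal power series
$$K_\beta(t) := 1 + \sum_{n \geq 1} s_\beta(1, n+1)\, t^n,$$
whose coefficients are the cumulative signs along the postcritical orbit of $1$; this is the tent-map analogue of the definition from \S\ref{ss:quadratic maps}. My plan is to reduce the lemma to two independent sub-claims. First: $\It_\beta(1,\cdot)$ is monotone non-decreasing as a function of $\beta$, with respect to the twisted lexicographic ordering $\leq_E$. Second: the assignment $w \mapsto K_w(t) := 1+\sum_{n \geq 1} s_w(n+1)\, t^n$, from admissible sequences to $\mathbb{Z}[[t]]$, is strictly \emph{order-reversing}, i.e., $w <_E v$ implies $K_w(t) > K_v(t)$ in the standard power-series ordering.

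The second sub-claim is a short direct calculation. If $n$ is the first index at which $w$ and $v$ differ, then the cumulative signs depend only on the preceding letters, so the first $n$ of them agree and $K_w - K_v$ vanishes through order $n-1$. A two-line case analysis on $s_w(n) = s_v(n) \in \{\pm 1\}$, using the definition of $<_E$ together with $E(0)=+1$ and $E(1)=-1$, shows that in each case the digits $w_n$ and $v_n$ are forced in such a way that $s_w(n+1) = +1$ while $s_v(n+1) = -1$. Hence the leading nonzero coefficient of $K_w - K_v$ equals $+2$, so $K_w > K_v$.

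The first sub-claim is the main obstacle, since it is the only step that requires comparing the dynamics for different values of $\beta$ rather than manipulating symbolic sequences. I plan to derive it by invoking the analogous monotonicity statement for kneading sequences in Milnor–Thurston kneading theory \cite{MilnorThurston}, which asserts that in any monotone family of unimodal maps the kneading sequence of the critical value is monotone in the parameter. Alternatively, a direct argument can be extracted from the $\beta$-tent map expansion of Proposition \ref{fact:betaexpansion}: assuming $\beta_1 < \beta_2$ have itineraries $w^{(1)}, w^{(2)}$ that first differ at position $n$, a term-by-term comparison of the two identities $1 = \sum_{j \geq 1} s_{\beta_i}(1,j)\, d_{\beta_i}(1,j)/\beta_i^j$ rules out $w^{(2)} <_E w^{(1)}$ via a sign analysis parallel to the one in the second sub-claim.

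Combining the two sub-claims gives the lemma: $\beta_1 < \beta_2$ forces $w^{(1)} \leq_E w^{(2)}$, which in turn gives $K_{\beta_1} \geq K_{\beta_2}$, with strict inequality whenever the two itineraries differ.
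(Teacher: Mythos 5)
Your sub-claim 2 is correct: with the convention that the coefficient of $t^n$ is the cumulative sign $s(1,n+1)$, a first-difference case analysis does force the leading coefficient of $K_w-K_v$ to be $+2$ whenever $w<_E v$, so the passage from admissible words to kneading determinants is order-reversing; this is essentially a (sign-corrected) form of Lemma \ref{l:kneadingdetswordsrelationship}. The genuine gap is sub-claim 1, which carries all of the analytic content of the lemma, and neither of your two routes to it holds up as written. The citation you lean on does not exist in the generality you state: Milnor--Thurston's monotonicity theorem (Theorem 13.1 of \cite{MilnorThurston}) concerns the specific quadratic family $f_a(x)=(x^2-a)/2$, not ``any monotone family of unimodal maps''; monotonicity of kneading data is not automatic for monotone unimodal families, and the tent family is not literally an instance of that theorem, so quoting it for $\It_\beta(1,\cdot)$ requires exactly the transfer step you omit. (This is in fact what the paper's proof does: it stays inside the quadratic family, where Theorem 13.1 gives monotonicity of the kneading determinant in $a$ and Corollary 13.2 gives monotonicity of the growth rate in $a$, observes that the family realizes every growth rate, and then reads off the statement for tent maps.) Your fallback ``direct argument from Proposition \ref{fact:betaexpansion}'' is also not yet an argument: the series $\sum_j s(1,j)d(1,j)\beta^{-j}$ has mixed signs, so it is not monotone in $\beta$ term by term, and after the first disagreement of $w^{(1)}$ and $w^{(2)}$ you must compare two identities evaluated at \emph{different} values $\beta_1\neq\beta_2$ while controlling sign-alternating tails; ruling out $w^{(2)}<_E w^{(1)}$ is precisely the nontrivial step, and a ``sign analysis parallel to sub-claim 2'' does not dispose of it.

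There is also a structural caution: your sub-claim 1 is exactly the content of Corollary \ref{prop:word_monotonicity}, which the paper \emph{deduces from} the present lemma together with Lemma \ref{l:kneadingdetswordsrelationship}. So if you want to prove the lemma by first establishing word monotonicity, you must give an independent proof of that monotonicity (for example by routing through the quadratic family as the paper does, or by a genuine piecewise-linear/combinatorial argument for the tent family); otherwise the reasoning is circular relative to the paper's logical order. As it stands, the proposal reduces the lemma to a statement that is at least as hard as the lemma itself and is left unproved.
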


\begin{proof}
For the real one-parameter family of maps $f_a(x) = (x^2-a)/2$,
\cite[Theorem 13.1]{MilnorThurston} asserts that the kneading determinant
$D(f_a) \in \mathbb{Z}[[t]]$ is monotonically decreasing as a function of the
parameter $a$; and Corollary 13.2 asserts the growth rate is monotonically
increasing as a function of $a$.  The family of maps  $\{f_a\}$ takes on
all possible growth rates; this can be seen from the fact that  $f_a$ is
conjugate to the map $g_{(-a/4)}(z) = z^2+(-a/4)$ via the conjugation map
$h(z)=z/2$, growth rate is a continuous function of $c$ (Proposition~\ref{prop:quad}(1)), and the Intermediate Value Theorem. 
\end{proof}

\begin{lemma} \label{l:kneadingdetswordsrelationship}
Let $f_\beta$ be a tent map with kneading determinant $a$ and let $w$ be
the $\beta$-itinerary; let $f_{\beta'}$ be a tent map with kneading
determinant $b$ and let $w'$ be the
$\beta'$-itinerary. If $a > b$, then $w <_E w'$. 
\end{lemma}

\begin{proof}
  From Remark~\ref{rem:knead-and-parry}, $a= 1+\sum_{i=1}^{\infty} s_w(i)
  t^i$, $b=1+\sum_{i=1}^{\infty} s_{w'}(i) t^i$. Let $n$ be the smallest
  natural number such that $s_w(n) \neq s_{w'}(n)$.  
  We must have $s_w(1)=s_{w'}(1)$, so we may assume $n \geq 2$. 
  By definition, $a>b$ implies $s_w(k)=s_{w'}(k)$ for all $k=1,\ldots,n-1$,
  and 
  $s_w(k)=(-1)^{w_{k-1}}s_w(k-1)$, so we must have $w_j =w'_j$ for $1 \leq j
  \leq n-2$ and $w_{n-1} \neq w'_{n-1}$.
  Since $s_w(n)>s_{w'}(n)$, the two possibilities are:
$$s_{w}(n-1)=s_{w'}(n-1) = +1, \quad  w_{n-1} = 0, \quad w'_{n-1} = 1, \textrm{ or}$$ 
$$s_{w}(1,n-1)=s_{w'}(n-1) = -1,\quad w_{n-1} = 1, \quad w'_{n-1} = 0.$$
In both cases, we have $w <_E w'$.
\end{proof}

Combining Lemma~\ref{l:kneadingdetsgrowthrate} and Lemma~\ref{l:kneadingdetswordsrelationship}, we have:

\begin{corollary} \label{prop:word_monotonicity}
If $1<\beta<\beta'\leq 2$, the $\beta$-itinerary $w$ and 
$\beta'$-itinerary $w'$ satisfy the inequality $w<_Ew'$.
\end{corollary}

\subsection{Renormalization} \label{s:renormalization}\label{sec:renormalization}

We will develop the proof of the Persistence Theorem~\ref{mainthm:closurepersistence} using a
combinatorial approach of Tiozzo \cite{TiozzoGaloisConjugates}. Certain
tent maps $f_\beta$ admit itineraries with strong combinatorial properties.
Due to the renormalization phenomenon, if the slope $\beta$ is at most 
$\sqrt2$ then it is impossible for any associated itinerary to satisfy this
strong combinatorial property. {\em Renormalization} is how we and Tiozzo
compensate for this obstruction. 

One of the renormalization or ``tuning'' procedures on the Mandelbrot set (see e.g. \cite[\S~7.2]{TiozzoGaloisConjugates}) implies the following:

\begin{lemma}[Renormalization Lemma]\label{l:renorm}
  If $\beta\in (1, \sqrt{2}]$, then $f_\beta$ 
  has periodic critical orbit of length $2k$ 
  if and only if $f_{\beta^2}$ has periodic critical orbit of length
  $k$. 
\end{lemma}
This is a well-known fact and we give a short proof below for completeness.

\begin{proof}
Consider the intervals $J_1^\beta=\left[2-\beta, {2\over 1+\beta}\right]$,
$J_2^\beta=\left[{2\over 1+\beta}, 1\right]$. Then it is straightforward to
check that $f(J_1)\subset J_2$, $f(J_2)\subset J_1$, and $f^2\colon
J_2\rightarrow J_2$ is a unimodal piecewise linear map with constant slope
$\beta^2$ and critical value equal to $1$, and is clearly conjugate to the
tent map of slope $f_{\beta^2}$. Hence, $f_\beta^{  n}$, the
$n$th iterate of $f_\beta$, fixes
1 
if and only if $(f_\beta^2\big|_{J_2})^{  n/2}$, the $n/2$-iterate of
the restriction, also fixes 1.
\end{proof}


The Renormalization Lemma~\ref{l:renorm} motivates the following
definition:
\begin{definition}
  \label{def:renormalizable}
A tent map $f_\beta$ with growth rate 
$\beta$ in the interval $(1,2]$ is defined to be
{\em renormalizable} if $\beta\leq \sqrt2$, and is otherwise {\em
nonrenormalizable}. 
\end{definition}

With the Renormalization Lemma~\ref{l:renorm}, 
we reduce the Persistence Theorem~\ref{mainthm:closurepersistence} to the
following proposition, which is essentially persistence restricted to
growth rates of nonrenormalizable tent maps:

\begin{proposition}\label{p:pers}
  Let $\beta \in (1,2]$ be the growth rate of a critically periodic tent
  map and let $z\in \mathbb{D}$ be a root of the Parry polynomial of
  $\beta$. Then for any real number $y$ satisfying $2 >y>\max\{\beta,
    \sqrt{2}\}$ and any real number $\epsilon>0$, there exist a real number
    $\beta'$ within $\epsilon$ of $y$ such that
  \begin{enumerate}
  \item one of the Galois conjugates of $\beta'$ is within distance $\epsilon$
   of $z$, and 
  \item a Parry polynomial of $\beta'$ is of the form $(x-1)f(x)$, where
    $f(x^{2^n})$ is irreducible in $\mathbb{Z}[x]$ for all natural numbers $n$.
  \end{enumerate}
\end{proposition}

The proof of Proposition \ref{p:pers} will appear at the end of Section
\ref{sec:persistence}. 
We confirm here that this proposition is indeed sufficient to prove 
the Persistence Theorem~\ref{mainthm:closurepersistence}.

\begin{proof}[Proof of the Persistence Theorem~\ref{mainthm:closurepersistence} from Proposition~\ref{p:pers}]
  Suppose $(z, \beta)$ is a pair in the Master Teapot $ \Upsilon_2^{cp}$.
  We want to show that if $y\in [\beta, 2]$, then $(z,
  y)\in\Upsilon_2^{cp}$.
    Note that it suffices to consider
  $y>\beta$. 

  Let $\epsilon>0$. By definition of the Master Teapot, there exists a 
  $\beta_0$ that is the growth rate of some tent map
  $f_{\beta_0}$ with periodic critical orbit, $z_0$ a Galois conjugate of
  $\beta$, such that $|z_0-z|<\epsilon$ and $|\beta_0-\beta|<\epsilon$.
  Then we
  may choose $\epsilon$ small enough that $y>\beta_0$ as well. 
  
  If
  $y>\sqrt2$ then Proposition~\ref{p:pers} along with the triangle inequality
  directly implies existence of 
  $\beta'$ within $\epsilon$ of $y$, such that one of the Galois
  conjugates of $\beta'$ is within $2\epsilon$ distance from $z$, as
  desired. 

  It remains to consider $y<\sqrt2$. Fix an integer $n$ 
  such that $y\in \left(2^{1\over
  2^{n+1}}, 2^{1\over 2^n}\right]$.  
  By the Renormalization Lemma~\ref{l:renorm}, $\beta_0^{2^n}$ is the
  growth rate of some tent map with periodic critical orbit, is clearly
  less than $y^{2^n}$, and
  $z_0^{2^n}$ is a Galois conjugate of $\beta_0^{2^n}$ because the Galois group
  consists of field automorphisms.  
  Since $\sqrt2<y^{2^n}\leq 2$,  
  again by Proposition~\ref{p:pers}, 
  we can find some $\beta'$ within
  $\epsilon$ of $y^{2^n}$, such that one of the Galois conjugates $z'$ of
  $\beta'$ is within $2\epsilon$ of $z^{2^n}$. The second condition in
  Proposition~\ref{p:pers} implies that all the $2^n$-th roots of $z'$ are
  Galois conjugates of $(\beta')^{1\over 2^n}$. The conclusion follows.
\end{proof}

\section{Dominant words}\label{sec:dominant} \label{sec:auxiliary}

In this section we will review Tiozzo's definition of dominant words in \cite{TiozzoTopologicalEntropy} and the properties of dominant words proved in \cite{TiozzoTopologicalEntropy}.

In \cite{TiozzoTopologicalEntropy}, Tiozzo considers the tent map with slope 2 $f_2$, then any points $x$ in unit interval has a correspondence with a infinite sequence $w_x$ in $\{0, 1\}^{\mathbb{N}}$, which is its Markov coding, where the two subintervals are $I_0=[0, 1/2]$ and $I_1=[1/2, 0]$. It is obvious that $f_2$ is semiconjugate to the shift map $\sigma$ under this correspondence, and $x<y$ implies that $w_x<_Ew_y$ where $<_E$ is the twisted lexicographical order defined earlier.

For any finite word $w$ of length $n$, let $x$ be the point on the 
unit interval labeled by $w^\infty$, then the {\em Cylinder set} $C_x$ is
defined as the  subinterval which contains $x$ and $f_2^{n+1}$ sends it
homeomorphically to the unit interval $I$; in other words, $C_x$ is the
closure of points with a coding starting with $w\cdot w_1$. Here $w_1$ is the first letter of $w$.

\begin{definition}\cite[Definition 10.4]{TiozzoTopologicalEntropy} A finite
  word $w$ in the alphabet $\{0, 1\}$ is called
  {\em dominant} if and only if it satisfies the following two conditions:
  \begin{enumerate}
  \item $w$ has positive cumulative sign.
  \item For any $1\leq k\leq n-1$, $f_2^{k}(C_x)$ lies to the left of $C_x$ and their interiors are disjoint.
  \end{enumerate}
\end{definition}

Now, because $x<y$ implies $w_x<_Ew_y$, and $f_2^k(C_x)$ is the closure of points whose itineraries start with a suffix of $w$ followed by the first letter of $w$,
we can make the definition of dominance more explicit as below:

\begin{lemma} \label{l:dominantequivalentcharacterization}
  \label{lem:equivalentdefinitiondominance}
	Let $w$ be a word in the alphabet $\{0,1\}$ that starts with $10$
	and has positive cumulative sign.  Then $w$ is dominant if and only
	if for any proper suffix $b$ of $w$, the word $(b1)$ is (strictly)
	smaller than the prefix of $w$ of length $|b|+1$ in the twisted
	lexicographical ordering $<_E$. \qed
\end{lemma}

\begin{definition}
A word $w$ in the alphabet $\{0,1\}$ is \emph{irreducible} if there exists no shorter word $w_0$ in the alphabet $\{0,1\}$ and integer $n \geq 2$ such that $w=(w_0)^n$. 
\end{definition}

The main result we will cite from Tiozzo's work is the following, which can
be read from the proof of \cite[Theorem 10.5]{TiozzoTopologicalEntropy} in
\cite[Section 10.1]{TiozzoTopologicalEntropy}. One can read it from the first paragraph of the proof of Lemma 10.6 on page 689, and the third paragraph of the proof of Proposition 10.5 on page 692.

\begin{proposition}[{\cite[Section 11.2]{TiozzoTopologicalEntropy}}]
  \label{prop:densedominant}
  If $\beta\in(\sqrt{2},2]$ and $w$ is a word in the alphabet $\{0,1\}$
  such that
  $w^\infty$ is a $\beta$-itinerary, then for any positive integer 
  $n$ there exists a word $w'$ in the alphabet $\{0,1\}$ that is a power of
  some dominant word such that $w^nw'$ is also a dominant word. 
\end{proposition}

Tiozzo uses this observation to prove that the growth rates associated to
dominant words are dense in the interval $[\sqrt2,2]$.

The following lemma is straightforward to show from calculation:
\begin{lemma}\label{lem:firstletters}
Any $\beta$-itineraries for $\beta\in (1, 2]$ start with $10$.\qed
\end{lemma}

Because of Lemma~\ref{lem:firstletters},  the dominant itineraries obtained in Proposition~\ref{prop:densedominant} must satisfies the assumptions of Lemma~\ref{l:dominantequivalentcharacterization}.

\section{Persistence}\label{sec:persistence}

The goal of this section is to prove Proposition~\ref{p:pers}, which is a
version of persistence restricted to growth rates of nonrenormalizable tent
maps; that
is, growth rates 
that are larger than $\sqrt2$. As discussed in Section
\ref{sec:connections} following the statement of the proposition, the
Persistence Theorem~\ref{mainthm:closurepersistence} follows by
renormalization. 

\subsection{Constructing dominant extensions}

The development of persistence for growth rates of nonrenormalizable tent
maps 
hinges on a series of technical combinatorial lemmas. The goal of this
subsection is the proof of the Extension Lemma
\ref{lem:extension}.

\begin{proposition}
Assume $w_1$ is dominant, $w_2$ is admissible and irreducible, 
$n$ is a positive integer such that 
\[
	2n|w_2| > |w_1| > n|w_2|,
\]
 $w_1^\infty>_E w_2^\infty$, and $w_2^n$ has positive cumulative sign. 
Then $(w_1w_2^n)^{\infty}$ is admissible.
\label{prop:thekeylemma}
\end{proposition}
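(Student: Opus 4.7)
The plan is to verify admissibility of $(w_1 w_2^n)^\infty$ via Lemma \ref{l:equivalentAdmissibility}: I need to show that for every nontrivial decomposition $W = w_1 w_2^n = xy$ with $y$ beginning with $10$, one has $yx \leq_E xy$. I would split the argument according to whether the cut $|x|$ falls in the initial $w_1$ block or in the trailing $w_2^n$ block, and in each case exploit one of the two main hypotheses: dominance of $w_1$ in the first case, and the ordering $w_1^\infty >_E w_2^\infty$ together with admissibility of $w_2$ in the second.

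In the first case ($|x| < |w_1|$), $y = b \cdot w_2^n$ where $b = w_1[|x|+1\ldots|w_1|]$ is a proper suffix of $w_1$; since $w_2$ starts with $1$, the requirement that $y$ begin with $10$ forces $|b|\geq 2$ and $b$ itself to start with $10$. I would apply the intrinsic characterization of dominance (Lemma \ref{l:dominantequivalentcharacterization}) to obtain $b\cdot 1 <_E w_1[1\ldots|b|+1]$ strictly. If $b$ already disagrees with $w_1[1\ldots|b|]$ at some position $p\leq|b|$, the comparison $yx$ vs $xy$ is decided there in the desired direction; otherwise $b = w_1[1\ldots|b|]$, the strict inequality forces $w_1[|b|+1]=0$ under a negative cumulative sign, and since $(yx)[|b|+1] = w_2[1]=1$ while $(xy)[|b|+1]=0$, the comparison again yields $yx<_E xy$.

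In the second case ($|x| \geq |w_1|$), I would parameterize $|x| = |w_1| + k|w_2| + r$ with $0\leq k<n$ and $0\leq r<|w_2|$, and decompose $w_2 = ps$ with $|p|=r$, so that $y = s\cdot w_2^{n-k-1}$ (with $s=w_2$ when $r=0$). The first $|y|$ letters of $yx$ coincide with the first $|y|$ letters of $\sigma^r(w_2^\infty)$, and by admissibility of $w_2$ together with $w_1^\infty >_E w_2^\infty$ one has $\sigma^r(w_2^\infty) \leq_E w_2^\infty \leq_E w_1^\infty$. If these infinite sequences first disagree at some position $q'\leq|y|$, the comparison of $yx$ and $xy$ is decided there in the correct direction; otherwise $w_1$ must begin with $y$, and the task reduces to comparing a shifted copy of $w_1$ (appearing in $x$, starting at position $|y|+1$ of $yx$) against the suffix $w_1[|y|+1\ldots|w_1|]$ (appearing in $xy$).

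The hardest part will be this final subcase: dominance of $w_1$ gives the prefix-versus-suffix inequality only in the intrinsic sign frame of $w_1$, whereas the embedded comparison inside $yx$ carries the cumulative sign $\operatorname{sign}(y)$ and so may reverse the direction. The hypothesis that $w_2^n$ has positive cumulative sign, combined with the length bounds $n|w_2| < |w_1| < 2n|w_2|$ (which guarantee $|y|\leq n|w_2| < |w_1|$ and restrict how the reduced comparison sits inside one period of $W$), should be precisely what is needed to track the cumulative-sign flip and verify that it aligns with dominance of $w_1$ to yield $yx<_E xy$. Irreducibility of $w_2$ is used at the margins to rule out degenerate decompositions in which $y$ coincides with a proper power of a shorter word.
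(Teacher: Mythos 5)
Your overall architecture matches the paper's: the paper verifies $\sigma^k(w_1w_2^n)^\infty\leq_E(w_1w_2^n)^\infty$ for all shifts, which is the same criterion as your $yx\leq_E xy$ reduction, and your first two cases are handled exactly as in the paper (cuts inside $w_1$ via the intrinsic dominance characterization, Lemma \ref{l:dominantequivalentcharacterization}; cuts inside the $w_2^n$ block via admissibility of $w_2$ together with $w_1^\infty>_Ew_2^\infty$, with the bound $|y|\leq n|w_2|<|w_1|$ ensuring the leftover suffix of $w_2^n$ can only be a \emph{proper} prefix of $w_1$). The problem is the final subcase, which you leave as an expectation rather than an argument, and which is where the entire content of the proof lives. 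Your guess about how it closes is also not how it actually closes: the resolution is \emph{not} that the positive cumulative sign of $w_2^n$ and the upper bound $|w_1|<2n|w_2|$ "align" the embedded comparison with dominance, and irreducibility of $w_2$ is not a marginal degeneracy check.

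What is actually needed is the claim that when the suffix $y=\sigma^r(w_2^n)$ is a prefix of $w_1$, it must have \emph{negative} cumulative sign. The paper proves this by contradiction: if $y$ had positive sign, let $a$ be the prefix of $w_2^n$ with $|a|=|y|$; admissibility of $w_2^n$ gives $y\leq_E a$ (suffix versus prefix), while $w_2^\infty<_Ew_1^\infty$ gives $a\leq_E$ (prefix of $w_1$ of that length) $=y$, so $y=a$ and one gets a factorization $w_2^n=ac=da$. Positivity of $a$ then forces $w_2^\infty=(ac)^\infty=a^\infty$, irreducibility of $w_2$ forces $a=w_2^m$, hence $w_1=w_2^mf$, and dominance of $w_1$ yields $w_1^\infty\leq_E w_2^\infty$, contradicting the hypothesis $w_1^\infty>_Ew_2^\infty$. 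So irreducibility is essential precisely here. Once negativity of the sign of $y$ is known, the desired inequality follows by taking the already-established inequality for the shift by $|y|<|w_1|$ (your dominance case) and prepending $y$: by Remark \ref{rem:flippingsigns} the negative sign \emph{reverses} that inequality into $\sigma^k(w_1w_2^n)^\infty\leq_E(w_1w_2^n)^\infty$. Without this negative-sign lemma your final subcase does not close, so as written the proposal has a genuine gap at its crux.
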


\begin{proof}
It suffices to show that \[\sigma^k(w_1w_2^n)^\infty\leq_E (w_1w_2^n)^\infty\] for all $k<|w_1|+n|w_2|$.
  If $1<k<|w_1|$, denote by $b$ the proper suffix of $w_1$ of length $|w_1|-k$. Then $(b1)$ is a
  prefix of $\sigma^k (w_1w_2^n)$
  because the first
  letter of $w_2$ is 1 by admissibility and Lemma \ref{lem:firstletters}.
  By dominance of $w_1$ and Lemma \ref{l:dominantequivalentcharacterization}, $(b1)$ is
  smaller than the prefix of $w_1$ of length $|b|+1$ in the twisted lexicographical ordering, 
  which proves
  \[\sigma^k(w_1w_2^n)=bw_2^n<_E w_1\] and provides the desired inequality. 

  If $k=|w_1|$, for contradiction, see that existence of $n$ such that $w_2^n\geq_E w_1$ implies
  \[
    w_2^\infty <_Ew_1^\infty \leq_E (w_2^n)^{\infty}=w_2^\infty,
  \]
  which is impossible given the assumption that $w_2^\infty$ is smaller than
  $w_1^\infty$ in the twisted lexicographical ordering. Thus, \[\sigma^{|w_1|}(w_1w_2^n)^\infty
  =w_2^n (w_1w_2^n)^\infty <_E
  (w_1w_2^n)^\infty.\] 

  Lastly, we consider the shift by $k$ where $|w_1|<k<|w_1|+n|w_2|$. Let $r=k-|w_1|$, so that
  $1<r<n|w_2|$. 
  See that $\sigma^rw_2^n>_Ew_1$ is impossible, because $\sigma^rw_2^n>_Ew_1$ 
  and admissibility of $w_2$ implies
  \[w_1^\infty<_E\sigma^r(w_2)^\infty\leq_E w_2^\infty,\] a contradiction. We conclude that $\sigma^rw_2^n\leq_E w_1$.
  If this inequality is strict, we are done: we would have 
  \[\sigma^k(w_1w_2^n)
  =\sigma^{|w_1|+r}(w_1w_2^n)=\sigma^rw_2^n<_Ew_1\] as desired.

  We must now consider when this inequality is not strict; in other words,
  $\sigma^rw_2^n$ is a prefix of $w_1$. We will need to prove that such
  a string must always have cumulative negative sign. If it does, then $|w_1|-r<|w_1|$
  implies 
  \[
    \sigma^{|w_1|-r}(w_1w_2^n)^\infty \leq_E (w_1w_2^n)^\infty 
  \]
  by dominance of $w_1$ discussed above. Then by Remark~\ref{rem:flippingsigns} and the fact that
  $\sigma^rw_2^n$ has negative cumulative sign,
  \begin{align*}
    (w_1w_2)^\infty =\sigma^rw_2^n\sigma^{|w_1|-r}w_1 w_2^n (w_1w_2^n)^\infty &\geq_E
    \sigma^rw_2^n(w_1w_2^n)^\infty \\
    &= \sigma^{k-|w_1|}w_2^n(w_1w_2^n)^\infty \\
    &= \sigma^k(w_1w_2^n)^\infty.
  \end{align*}

  It remains to prove that if $\sigma^rw_2^n$ is a prefix of $w_1$, then it cannot have cumulative
  positive sign. Consider the suffix $b=\sigma^rw_2^n$ of $w_2^n$. Since $w_2^n$ is admissible,
  $b\leq_E a$, where $a$ is the prefix of $w_2^n$ of the same length. 
  Since $w_2^\infty <_E
  w_1^\infty$, moreover $a$ is smaller than or equal to the prefix of $w_1$ of the same length,
  which is assumed to be equal to $b$. Then $b\leq_E a\leq_E b$ implies equality, and we conclude
  $w_2^n=ac=db=da$. 

  Now 
  \[
	  w_2^\infty=(ac)^\infty=(da)^\infty\geq_E a\concat (da)^\infty,
  \]
  implying
  \[
	  (ca)^\infty\geq_E(da)^\infty=w_2^\infty\geq_E (ca)^\infty
  \]
  because we assumed that $a$ has positive cumulative sign (see Remark \ref{rem:flippingsigns}) and $w_2$ is admissible, hence 
  $(ca)^\infty=(ac)^\infty$. Then
  \[
	  w_2^\infty= (ac)^\infty =a\concat(ca)^\infty=a\concat(ac)^\infty=a^2(ca)^\infty =
	  \cdots=a^\infty
  \]
  implies $a=w_2^m$ for some $m$ because $w_2$ is irreducible. 
  Then
  $w_1=af = w_2^mf$ for some suffix $f$, and again by dominance of $w_1$ and
  Lemma \ref{lem:equivalentdefinitiondominance},
  \[
	  w_1^\infty=(w_2^mf)^\infty=w_2^m(fw_2^m)^\infty\leq_E
	  (w_2^mw_1)^\infty=w_2^{2m}(fw_2^m)^\infty\leq_E\dots\leq_E w_2^\infty,
  \]
  which contradicts the assumption that $w_1^\infty>_Ew_2^\infty$. 
\end{proof}

Now we want to further make sure that $(w_1w_2^n)^\infty$ is a
$\beta$-itinerary and has a Parry polynomial with a large irreducible factor.
However, this would require some slight modification of the construction as
below:

\begin{definition}
  We say that a string 
  $v$ is an {\em extension} of a 
  word $w$ if $w$ is a proper prefix of
  $v$. If $v$ is finite then such a $v$ is a {\em finite extension} of $w$. 
\end{definition}

\begin{lemma}[Extension]
  \label{lem:extension}
Let $w_1$ be dominant such that $w_1>_E10\cdot 1^{|w_1|-2}$ and $w_2$ be 
admissible and irreducible, $w_1^\infty>_E w_2^\infty$, and assume
there exists an $m$ such that 
\[
	2m|w_2|>|w_1|>m|w_2|.
\]
Then there exists 
a finite extension $w_1'$ of $w_1$ and an integer $m'\geq m$ such that $(w_1'w_2^{m'})^\infty$ is
admissible, $|w_1'|>m'|w_2|$, and
$P(z^{2^k})/(z^{2^k}-1)$ is an irreducible polynomial for any $k\geq 0$, where $P$ is the Parry polynomial of $(w_1'
    w_2^{m'})$. 

\end{lemma}

The following Lemma will give us a recipe for extending $w_1$, as needed
for the Extension Lemma.

\begin{lemma}
  Let $w$ be a dominant word, such that $w_1>_E10\cdot 1^{|w_1|-2}$.
  Then the words
  \[
    w\concat 10\concat 1^\kappa\concat 10\concat 1^{|w|}\concat
    \mathbf{01}\concat 1^{|w|} \text{ and }w\concat 10\concat 1^\kappa
    \concat 10\concat 1^{|w|}\concat \mathbf{10}\concat 1^{|w|}
  \]
  for any odd natural number $\kappa>|w|$, and
  \[
    w\concat 1^\kappa\concat 10\concat 1^{|w|}\concat \mathbf{01}\concat
    1^{|w|} \text{ and }w\concat 1^\kappa \concat 10\concat 1^{|w|}\concat
    \mathbf{10}\concat 1^{|w|}
  \]
  for any even natural number $\kappa>|w|$, are all dominant extensions of $w$.

  Moreover, for each $\kappa$, the sums of the coefficients of the kneading
  polynomials for the two extensions differ by 2.
  \label{lem:extension_options}
\end{lemma}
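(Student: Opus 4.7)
The plan is to apply the equivalent criterion for dominance given by Lemma~\ref{lem:equivalentdefinitiondominance}: a word starting with $10$ and having positive cumulative sign is dominant iff for every proper suffix $b$, the word $b\cdot 1$ is strictly smaller than the prefix of the same length in the twisted lexicographic ordering.  Each candidate extension $v$ starts with $10$ because $w$ does.  For positive cumulative sign, I count $1$'s: $w$ contributes an even number (by dominance of $w$), the two runs $1^{|w|}$ contribute $2|w|$, and the block $\mathbf{01}$ or $\mathbf{10}$ and each $10$ block contribute one apiece.  The odd/even split on $\kappa$ is calibrated precisely so that the total count of $1$'s is even (the leading $10$ is inserted when $\kappa$ is odd to supply the missing unit of parity, and omitted when $\kappa$ is even).

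For the suffix inequalities, I would split into cases based on which of the blocks $w$, the buffer $10\cdot 1^\kappa$ (or $1^\kappa$), the intermediate $10\cdot 1^{|w|}$, or the terminal $\mathbf{01}\cdot 1^{|w|}$ / $\mathbf{10}\cdot 1^{|w|}$ contains the starting index of $b$.  When $b$ starts inside $w$, either the disagreement between $b\cdot 1$ and the prefix of $v$ falls inside the first $|w|$ letters and is decided by dominance of $w$, or the two strings agree through all of $w$ and the disagreement falls immediately after, where $v$ continues with $10$ or $1^\kappa$; a short twisted-lex comparison using the cumulative sign at that boundary yields strict inequality.  When $b$ starts in a later block, it has the structured form $1^k$, $0\cdot 1^k$, or $10\cdot 1^k$ and is compared against the prefix of $v$ beginning with $w = 10\cdots$; the first disagreement occurs within the first few letters and the twisted-lex comparison again yields strict inequality.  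The hypothesis $\kappa>|w|$ ensures that no comparison matches long enough to reach a tie, and the buffer $10\cdot 1^{|w|}$ plays the analogous role on the right.  I expect the main obstacle to be bookkeeping the parity of cumulative signs at every disagreement point, and it is exactly this bookkeeping that forces the odd/even dichotomy on $\kappa$.

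For the final assertion, fix $\kappa$ and compare the two extensions differing only by $\mathbf{01}\leftrightarrow\mathbf{10}$ at a single adjacent pair of positions $k, k+1$.  Since the local product of signs is unchanged, i.e.\ $E(0)E(1) = E(1)E(0) = -1$, the cumulative signs agree at positions $1,\ldots,k$ (identical prefixes) and at positions $k+2,\ldots,|v|$, but differ at the single position $k+1$, flipping between $s_k$ and $-s_k$.  By the identification of kneading-polynomial coefficients with cumulative signs of the itinerary derived in the proof of Lemma~\ref{l:kneadingdetswordsrelationship}, the two kneading polynomials differ in exactly one coefficient, changing by $\pm 2$; hence the sums of their coefficients differ by $2$, as claimed.
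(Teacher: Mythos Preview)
Your overall strategy is the paper's: apply Lemma~\ref{lem:equivalentdefinitiondominance}, verify that the parity condition on $\kappa$ gives an even count of $1$'s, case-split the proper suffixes $b$ according to where they begin, and handle the coefficient-sum claim by noting that the swap $\mathbf{01}\leftrightarrow\mathbf{10}$ flips exactly one cumulative sign.  The case ``$b$ starts inside $w$'' is also handled the same way (your second sub-case there is actually vacuous: dominance of $w$ via Lemma~\ref{lem:equivalentdefinitiondominance} forces a strict disagreement within the first $|b_0|+1$ letters, where $b_0$ is the suffix of $w$, precisely because the letter following $w$ in every extension is $1$).

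The genuine gap is your treatment of suffixes $b$ that begin after $w$ and start with $10$.  Your assertion that ``the first disagreement occurs within the first few letters'' is not correct: such a $b$ has $b\cdot 1$ beginning with $10\cdot 1^m$ for some $m\geq |w|$, while the prefix of $v$ begins with $w=10\cdots$, and if $w$ itself starts $101\cdots$ the comparison may run deep into $w$ before a disagreement appears.  The paper's argument here is the point you are missing: by construction (and this is exactly what $\kappa>|w|$ and the buffer $10\cdot 1^{|w|}$ are for), $b\cdot 1$ agrees with $\It_{\sqrt 2}(1)=10\cdot 1^\infty$ for at least $|w|+1$ letters; since dominant words have growth rate strictly larger than $\sqrt 2$ (Proposition~\ref{p:dominantgrowthratesarebig}), monotonicity (Corollary~\ref{prop:word_monotonicity}) gives $w^\infty >_E 10\cdot 1^\infty$, and that is what forces the strict inequality $b\cdot 1 <_E (\text{prefix of }v)$.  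Your remark that $\kappa>|w|$ ``ensures no comparison matches long enough to reach a tie'' is pointing at the right hypothesis but does not supply the mechanism---you need the comparison with the $\sqrt 2$-itinerary via monotonicity.
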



\begin{proof}
  The parity condition on $\kappa$ is to guarantee that the new word has an
  even number of 1s, which is part of the definition of dominance.

  We apply the alternate definition of dominance from Lemma
  \ref{lem:equivalentdefinitiondominance}. Let $w'$ be one of the possible
  extensions in the statement of the Lemma. Let $b$ be any suffix of $w'$.
  If a prefix of $b$ is a suffix of $w$, then $(b1)$ is smaller than the
  prefix of $w'$ of the same length in the twisted lexicographical ordering
  by dominance of $w$ and the construction of $w'$. If not, then if $b$
  starts with 0 or $11$, and the desired inequality is immediate, so the
  interesting case is if $b$ starts with $10$ and no prefix of $b$ is a
  suffix of $w$. By construction, including our choice of $\kappa>|w|$ in
  the $\kappa$ odd case, we are comparing $10\cdot 1^{|w|-1}$ with $w\cdot
  1$, and the former must be smaller by assumption.

  For any natural number $\kappa$, odd or even, there are now two choices
  to extend $w$ to a dominant word. The two choices only differ by an
  exchange of $01$ with $10$ in one position.  This exchange will change
  the sum of the coefficients of the kneading polynomials by a factor of 2.
\end{proof}


\subsubsection{Proof of extension lemma}

\begin{proof}[Proof the Extension Lemma \ref{lem:extension}]
  We need to choose for $w_1'$ one of the extensions of $w_1$ from Lemma
  \ref{lem:extension_options}, and select $n$, $\kappa$, and
  $m'$ so that $|w_1'|$ has length $2^n-1-m'|w_2|$ and
  \[
    2m'|w_2|>|w_1'|>m'|w_2|.
  \]
  To do so, first define constants $C_1=1+|w_1|+m|w_2|$ and $C_2=|w_2|$. 
  Then choose $n$ for which 
  \[2^n>\max\{C_2(10m+3)+C_1,18C_2+C_1\}\] and define 
  \begin{equation}
    \label{eqn:def_k_n}
    k_n=\left\lceil \frac{2^n-C_1}{2C_2}\right\rceil-2,\qquad 
    k_n'=\left\lceil \frac{2^n-C_1}{2C_2}\right\rceil-3. 
  \end{equation}
  The two options $k_n$ and $k_n'$ are needed for parity reasons. 
  Choosing $2^n>C_2(10m+3)+C_1$ ensures that 
  \begin{equation} 
    k_n>k_n'>10m, 
    \label{eqn:bound_k_n_below}
  \end{equation}
  which becomes useful later in the proof when we define the length of the
  extension. The choice of $2^n>18C_2+C_1$ and the definition of $k_n,k_n'$
  ensures (respectively) that 
  \begin{equation}
    3k_n>3k_n'>\frac{2^n-C_1}{C_2}>2k_n>2k_n'.
    \label{eqn:controlling_extension_length}
  \end{equation}
  Let $m'=k_n+m$ if this is even, and else, replace $k_n$ with $k_n'$. We
  will proceed with the notational choice $m'=k_n+m$ and assume $m'$ is
  even, but note that the needed inequalities hold for both $k_n$ and
  $k_n'$. 

  Now, replacing $C_1,C_2$ with their definitions, applying Equation
  \eqref{eqn:controlling_extension_length}, and invoking the assumed
  relationship between $|w_1|$ and $|w_2|$, we see that  
  \begin{equation*}
    3m'|w_2|>3k_n|w_2|+m|w_2|+|w_1|>2^n-1>2k_n|w_2|+m|w_2|+|w_1|>2m'|w_2|,
  \end{equation*}
  which implies 
  \begin{equation}
    2m'|w_2|>2^n-1-m'|w_2|>m'|w_2|.
    \label{eqn:needed_to_apply_prop}
  \end{equation}
  We now adjust the extension $w_1'$ of $w_1$ to have length
  $|w_1'|=2^n-1-m'|w_2|$, so that $(w_1'w_2^{m'})$ has total length
  $2^n-1$. 

  If $|w_1|$ is odd, then 
  $\kappa=(2^n-1-m'|w_2|)-6-3|w_1|$ is even, as needed for 
  \[
    w\concat 1^\kappa\concat 10\concat 1^{|w|}\concat {\mathbf{01}}\concat
    1^{|w|}\quad \text{ and }\quad w\concat 1^\kappa \concat 10\concat 1^{|w|}\concat
    {\mathbf{10}}\concat 1^{|w|}
  \]
  to both be dominant extensions of $w_1$ by Lemma
  \ref{lem:extension_options}, each of length $2^n-1-m'|w_2|$.

  If $|w_1|$ is even, then 
  $\kappa=(2^n-1-m'|w_2|)-4-3|w_1|$ is odd, as needed for 
  \[
    w_1\concat 1^\kappa\concat 10\concat 1^{|w_1|}\concat {\mathbf{01}}\concat
    1^{|w_1|} \quad\text{ and }\quad w_1\concat 1^\kappa \concat 10\concat 1^{|w_1|}\concat
    {\mathbf{10}}\concat 1^{|w_1|}
  \]
  to both be dominant extensions of $w_1$ by Lemma \ref{lem:extension_options}, each of length
  $2^n-1-m'|w_2|$. In all the above cases, $\kappa>|w_1|$ follows from Equation
  \eqref{eqn:bound_k_n_below}. 

  For each choice, $w_1^\infty>_Ew_2^\infty$ implies
  $w_1'^\infty>_Ew_2^\infty$, and $w_2^{m'}$ has positive cumulative sign
  because we ensured that $m'$ is even. Combined with Equation
  (\ref{eqn:needed_to_apply_prop}), we have all the necessary hypotheses to
  apply Proposition \ref{prop:thekeylemma} and conclude that
  $(w_1'w_2^{m'})^\infty$ is admissible. We also designed $w_1'$ so that
  $|w_1'|>m'|w_2|$.

  The sum of the coefficients of the kneading polynomial of $w_1'w_2^{m'}$
  is even, because it has $2^n$ coefficients, each of which is either $-1$
  or $+1$.  By the final observation in Lemma \ref{lem:extension_options},
  we can choose the extension so that the sum of the 
  coefficients of the kneading polynomial for $w_1'w_2^{m'}$ is equivalent to
  $2 \mod 4$.  Since the kneading polynomial has degree $2^n-1$, we apply
  Lemma \ref{l:Tiozzo4.2} to conclude irreducibility. 
\end{proof}

\subsection{Controlling Galois conjugates and entropies of concatenations}

\begin{lemma}[Nearby roots] \label{l:smallrootsdontmovemuch}
Let $w_2$ be a 
word whose Parry polynomial has a root at $z_0 \in \mathbb{D}$.  Then for any $\epsilon > 0$, there exists an integer $N = N(\epsilon,w_2) \in \mathbb{N}$ such that $n > N$ implies that for every 
word $w_1$ 
for which $w_1w_2^n$ is admissible,
the Parry polynomial associated to $(w_1w_2^n)$ has a root within distance $\epsilon$ of $z_0$. 
\end{lemma}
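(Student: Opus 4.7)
The plan is to derive a closed-form algebraic identity relating $P_{w_1 w_2^n}$ to $P_{w_2}$ with a remainder that decays uniformly in $w_1$ on compact subsets of $\mathbb{D}$, and then apply Hurwitz's theorem. The structural reason to expect this is that as $n$ grows, $(w_1 w_2^n)^\infty$ agrees with $w_2^\infty$ on longer and longer initial segments after discarding the prefix $w_1$, so Parry polynomials should become ``approximately divisible'' by $P_{w_2}$ in the region of $\mathbb{D}$ where high-degree monomials are exponentially suppressed.

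First I would unroll the cumulative-sign bookkeeping across the concatenation. Setting $L = |w_2|$, $p = |w_1|$, $A(z) = \sum_{j=1}^p s_{w_1}(j) d_{w_1}(j) z^{p-j}$, and $Q(z) = \sum_{i=1}^L s_{w_2}(i) d_{w_2}(i) z^{L-i}$ (so that $Q(z) = z^L - 1 - P_{w_2}(z)$), a direct computation using that the cumulative sign of $w_2$ is $+1$ (which, via admissibility of $w_1 w_2^n$, forces the cumulative sign of $w_1$ to be $+1$ as well, so no stray signs appear) gives
\[
P_{w_1 w_2^n}(z) = z^{p+nL} - z^{nL} A(z) - Q(z) \sum_{k=1}^n z^{(n-k)L} - 1.
\]
Summing the inner geometric series as $(z^{nL} - 1)/(z^L - 1)$ and clearing denominators produces the key identity
\[
(z^L - 1)\, P_{w_1 w_2^n}(z) \;=\; z^{nL}\, T(z) \;-\; P_{w_2}(z),
\]
where $T(z) = (z^L - 1)(z^p - A(z)) - Q(z)$.

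Next I would show that $T$ is uniformly bounded on compact subsets of $\mathbb{D}$, independently of $w_1$. On the disk $\{|z|\leq r\}$ with $r < 1$, the bound $|A(z)| \leq \sum_{k=0}^{p-1} 2|z|^k \leq 2/(1-r)$ holds regardless of $p$, and similarly for $|Q|$, giving $|T(z)| \leq C(r, w_2)$, a constant independent of $w_1$. Consequently $z^{nL}T(z) \to 0$ uniformly on $\{|z|\leq r\}$, and since $|z^L - 1| \geq 1 - r^L > 0$ there, dividing yields
\[
P_{w_1 w_2^n}(z) \;\longrightarrow\; -\frac{P_{w_2}(z)}{z^L - 1}
\]
uniformly on $\{|z|\leq r\}$, with rate depending only on $r$ and $w_2$.

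Finally, choose $r$ with $|z_0| < r < 1$. The limiting function is holomorphic on $\{|z|\leq r\}$ (no $L$-th root of unity lies inside this disk) and vanishes at $z_0$ since $P_{w_2}(z_0)=0$. By Hurwitz's theorem applied on a small disk around $z_0$ containing no other zero of the limit, there exists $N = N(\epsilon, w_2)$ such that $P_{w_1 w_2^n}$ has a zero within $\epsilon$ of $z_0$ for all $n > N$ and all admissible $w_1$. The main obstacle is securing the uniformity of the error estimate over $w_1$: although $A$ has degree $|w_1|$ which can be arbitrarily large, the magnitude $|A(z)|$ on any compact subdisk of $\mathbb{D}$ is controlled by the geometric-series tail $\sum |z|^k$, yielding a bound that is genuinely independent of $|w_1|$ and justifies that the threshold $N$ can be chosen uniformly.
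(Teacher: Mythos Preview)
Your approach is correct and essentially identical to the paper's: the paper writes $P_{w_1w_2^n}(z) = z^{n|w_2|}P_{w_1}(z) + \bigl(z^{(n-1)|w_2|} + \cdots + 1\bigr)P_{w_2}(z)$ (which is your identity before clearing the geometric-series denominator by multiplying through by $z^L-1$), bounds the first summand uniformly in $w_1$ on a small circle about $z_0$, bounds the second away from zero there, and concludes by a winding-number/Rouch\'e argument---precisely the content of your Hurwitz step. One caveat: your claim that admissibility of $w_1w_2^n$ forces the cumulative signs of $w_1$ and $w_2$ to be $+1$ is not justified by admissibility alone; the paper's stated decomposition tacitly relies on the same sign condition (and in its applications the signs are indeed arranged to be positive), so this is a shared omission rather than a defect particular to your argument.
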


\begin{proof}
First, for any word $w$, denote the Parry polynomial associated to $w$ by $P_w$.  
Let $D$ be the closed disk of radius $\epsilon$ centered at $z_0$, and let $C$ be the boundary of $D$.  Without loss of generality, assume that $\epsilon$ is small enough that $D \subset \mathbb{D}$, and that $D$ contains no root of $P_{w_2}$ except $z_0$.  

For any $n \in \mathbb{N}$, it is straightforward to see that 
\[
	P_{w_1w_2^n}(z) = z^{n|w_2|}P_{w_1}(z) + \left(z^{(n-1)|w_2|} + z^{(n-2)|w_2|}+ \dots + 1 \right)P_{w_2}(z).
\]

\noindent Set $\alpha = \min_{z \in C} |P_{w_2}(z)|$, which exists and is positive by compactness
and the assumption that $D$ contains no root of $P_{w_2}$ except $z_0$. 
Set 
\[ 0<\beta  
 := \min_{z \in C} \left(1-|z|^{|w_2|}\right)/\left(1+|z|^{|w_2|}\right).\]
Then for all $z \in C$, we have 
\[
	\left|\left(z^{(n-1)|w_2|} + z^{(n-2)|w_2|}+ \dots + 1\right)P_{w_2}(z)\right| \geq \left|
	\frac{1-(z^{|w_2|})^n}{1-z^{|w_2|}} \right| \alpha \geq
	\frac{1-|z^{|w_2|}|}{1+|z^{|w_2|}|}\alpha \geq 
	\beta \alpha>0,
\]
	where the middle nonstrict inequality follows the triangle inequality and that
	$\left|z^{|w_2|}\right|<1$. 

Set $1> m: = \max_{z \in D} |z|$.  Also for all $z \in C$, since all coefficients of $P_{w_1}$ have absolute value at most $3$,
\[
	\left| z^{n|w_2|}P_{w_1}(z) \right|  \leq | z|^{n|w_2|} \left(1+3 \sum_{i=0}^{\infty} |z|^i \right) \leq
	m^{n|w_2|} \left(1+3 \sum_{i=0}^{\infty} m^i \right).
\]

\noindent Therefore, for sufficiently large $n \in \mathbb{N}$ depending only on $w_2$, we have 
\[
	\left| z^{(n-1) |w_2|}P_{w_1}(z) \right|  < \frac{\beta \alpha}{2}.
\]
Consequently, the winding number around $0$ of the image of $C$ under $P_{w_1w_2^n}$ equals the
winding number around $0$ of the image of $C$ under the map
$$z \mapsto \left(z^{(n-1)|w_2|} + z^{(n-2)|w_2|}+ \dots + 1\right)P_{w_2}(z).$$  
The winding number of the image around $0$ is related to number of zeros via the Argument Principle; for a holomorphic function $f$ and a simple closed contour $\Gamma$, the number $N$ of zeros of $f$ inside $\Gamma$ is given by
\begin{equation} \label{eq:argumentprinciple}
N = \frac{1}{2\pi i} \int_{\Gamma} \frac{f^{\prime}(z)}{f(z)}dz = \frac{1}{2\pi i} \int_{f(\Gamma)} \frac{dw}{w},
\end{equation} where $w=f(z)$.  
Since $P_{w_2}$ has a root in $D$, this implies $P_{w_1w_2^n}$ also has a root in $D$ for
sufficiently large $n$.  
 \end{proof}

\subsection{Dominant approximations of growth rates}

Finally, we prove that we can approximate $y\in(\sqrt2,2)$ with growth rates
corresponding to extensions of dominant strings. 

\begin{lemma}
  Let $w_1$ be an admissible word such that $w_1^\infty$ is a
  $\beta$-itinerary for some $\beta>1$.  For any $\epsilon>0$, there
  exists an integer $N = N(\epsilon,w_1)$ such $n>N$ implies that for
  every word $w_2$ for which $(w_1^n w_2)^\infty$ is a
  $\beta'$-itinerary, $\beta'$ is within distance $\epsilon$ of $\beta$.
  \label{l:leadingrootdontmovemuch}
\end{lemma}
\begin{proof}
  Let $w'$ be the $(\beta-\epsilon)$-itinerary, and let $w''$ be the
  $(\beta+\epsilon)$-itinerary. Let $N$ be large enough that the
  $N|w_1|$-prefix of $w'$, $w_1^\infty$ and $w''$ are all distinct. Then we
  must have $w'<_E(w_1^nw_2)^\infty<_Ew''$ so $\beta'\in (\beta-\epsilon,
  \beta+\epsilon)$. 
\end{proof}

Combining Lemma~\ref{l:leadingrootdontmovemuch},
Proposition~\ref{prop:densedominant}, and the fact that slopes of tent maps
with periodic critical orbits are dense, the following result is evident:
 
\begin{lemma}[Dominant approximations]
  For all $y\in(\sqrt2,2)$ and all $\epsilon>0$, there exists a sequence of
  dominant words $(w_n)_{i=1}^\infty$ such that for any admissible
  extension $w_n'$ of $w_n$, including the empty extension, if
  $(w_n')^\infty$ is a $\beta$-itinerary, then $\beta$ is within
  $\epsilon$ of $y$. 
  \label{lem:dominant_approximation}
\end{lemma}

\subsection{Proof of Proposition~\ref{p:pers}}

Now we prove Proposition~\ref{p:pers}, which will finish the proof of the Persistence Theorem~\ref{mainthm:closurepersistence}.

\begin{proof}[Proof of Proposition~\ref{p:pers}]
  Let $w$ be an irreducible word in the alphabet $\{0,1\}$ such that 
  $w^\infty$ is the $\beta$-itinerary. 
  Then $z$ is a root of the
  Parry polynomial of $w$, since this is equal to the Parry polynomial of
  $\beta$.  
  If $y = \beta$ the statement is trivial, so
  assume $y > \beta$.  Fix $$0<\epsilon<\frac{y-\beta}2.$$ Construct the
  sequence of dominant words $(w_n)$ as in 
  the Dominant Approximations Lemma~\ref{lem:dominant_approximation}; 
  the words $w_n$ satisfy that for any admissible extension $w_n'$ of
  $w_n$, if $(w_n')^\infty$ is some $\beta'$-itinerary, then
  $\beta'$ is within $\epsilon$ of $y$. 
  We will show there is a subsequence
  of $(w_n)$ with corresponding extensions $(w_n')$ which meet this
  criteria and whose corresponding growth rates have controlled Galois
  conjugates.

  Passing to subsequences as needed, we may assume that $|w_n| \to \infty$
  as $n \to \infty$, since there are only finitely many words of bounded
  length. 
	
  For each $n$, let $M_n=\left\lceil \frac{|w_n|}{|w|}\right\rceil - 2 $.
  Then 
  \begin{equation} \label{eq:iffstatement}
    2M_n|w|\geq 2\left(\frac{|w_n|}{|w|}-2\right)|w| = 2|w_n|-4|w|. 
  \end{equation}
  Since $2|w_n|-4|w| > |w_n|$ if and only if $|w_n|>4|w|$, we have from
  equation (\ref{eq:iffstatement}) that 
  \begin{equation} \label{eq:foriff}
    |w_n|>4|w| \ \Longrightarrow \
    2M_n |w| > |w_n|.  
  \end{equation}
  Observe that	
  \[
    |w_n|
    =\frac{|w_n|}{|w|}|w| > \left( \left\lceil
    \frac{|w_n|}{|w|}\right\rceil-2 \right)|w| 
    =M_n|w| \quad \text{ for all }n
  \]
  and $|w_n| \to \infty$.  Therefore, for all $n$ large enough that
  $|w_n|>4|w|$, there exists a positive integer $M_n$ such that 
  \[
    2M_n|w| > |w_n| > M_n |w|.
  \]
  Note also that $M_n \to \infty$ as $n \to \infty$.  	
  Thus, for sufficiently large $n$, the hypotheses of 
  the Extension Lemma~\ref{lem:extension} 
  hold, using $w_n$ in place of $w_1$ and $w$ in place of $w_2$.  
  Then 
  we conclude there exists an integer $m_n'>M_n$ and a dominant extension
  $w_n'$ of $w_n$ so that $(w_n'w^{m_n'})^\infty$ is admissible and the
  polynomial 
  \[
    \frac{P_{w_n'w^{m_n'}}(x^{2^k})}{1-x^{2^k}}
  \]
  is irreducible for all $k\geq 0$, where $P_{w_n'w^{m_n'}}$ is the Parry
  polynomial of the admissible word $w_n'w^{m_n'}$. Hence, by
  Proposition \ref{p:min}, $(w_n'w^{m_n'})^\infty$ is a
  $\beta'$-itinerary, and we have the criteria needed to apply the final
  conclusion of 
  the Dominant Approximations Lemma~\ref{lem:dominant_approximation}); 
  that is, $\beta'\in [y-\epsilon,
  y+\epsilon]$.  Since $M_n \to \infty$ as $n \to \infty$ and
  $m_n^{\prime} > M_n$, we have $m_n' \to \infty$ as $n\to \infty$.  
  
  We know $z$ is a root of $P_w$ so
  by the Nearby Roots Lemma~\ref{l:smallrootsdontmovemuch}),
  for sufficiently large $n \in \mathbb{N}$, we conclude $P_{w_n'
  w^{M_n'}}$ has a root within $\epsilon$ of $z$. 
\end{proof}



\section{The unit cylinder and connectivity}\label{sec:cylinder}


\begin{proposition} \label{p:bottomlevelunitcircle}
The bottom level of the Master Teapot is the unit circle, i.e. $$ \Upsilon_2^{cp} \cap (\mathbb{C} \times \{1\}) = S^1 \times \{1\}.$$  
\end{proposition}

\begin{proof}
  We will first show $S^1 \times \{1\} \subset \Upsilon_2^{cp}$.  By
  Proposition \ref{p:pers}, there exists some growth rate $\beta \in (1,2)$
  of a critically periodic tent map such that $\beta$ has a Galois
  conjugate $z \in \mathbb{D}$ satisfying the condition that, for all $k
  \in \mathbb{N}$, every $2^k$-th root of $z$ is a Galois conjugate of
  $\beta^{1\over 2^k}$.   Repeatedly applying renormalization to $(z,
  \beta)\in \Upsilon_2^{cp}$ - by which we mean considering the set of
  $2^k$ points of the form $(z^{1/2^k},\beta^{1/2^k})$, all of which are in
  $\Upsilon_2^{cp}$, as $k \to \infty$ - and then taking the set of limit
  points, we get that $S^1 \times \{1\} \subset \Upsilon_2^{cp}$.

To show  $\Upsilon_2^{cp} \cap (\mathbb{C} \times \{1\}) \subset S^1 \times \{1\}$, suppose there exists a point $(y,1)\in\teapot$ such that $|y|\neq 1$. 
Since $1$ has no nontrivial Galois conjugates, $(y,1) \in \mathbb{C} \times \mathbb{R}$ must be the the limit of a sequence of points $(\alpha_n,\beta_n) \in \mathbb{C} \times \mathbb{R}$ such that $\beta_n$ is the growth rate of a superattracting tent map and $\alpha_n$ is a Galois conjugate of $\beta$.  
Thus, reindexing the sequence as necessary, we have that for any $k>0$,
there exists $\beta_{k}$ with $1<\beta_{k}<1+\frac{1}{k}$ with Galois
conjugate $\alpha_{k}$, so that $|\alpha_k-y|<\epsilon$. Now by
renormalization, $\beta_k^{2^{n_k}} \leq 2$ is the slope of a critically
periodic tent map, where $n_k$ is the maximal value of $N$ for which
$\beta_k^{2^{N}}\leq 2$.  The fact that $\alpha_k^{2^{n_k}}$ is a Galois conjugate of $\beta_k^{2^{n_k}}$ follows immediately from the definition of a Galois automorphism.  Thus $(\alpha_k^{2^{n_k}},\beta_k^{2^{n_k}}) \subset \Upsilon_2^{cp}$.  

Now, $|\alpha_k|$ is bounded away from $1$ for $k$ sufficiently large (because $\alpha_k \to y$), and $n_k\to\infty$ as $k \to \infty$, since $\beta_k \to 1$ as $k \to \infty$.  Consequently, either $\alpha_k^{2^{n_k}} \to 0$  or $\alpha_k^{2^{n_k}} \to \infty$ as $k\to \infty$. This is a contradiction because
$$\Omega_2^{cp} \subset \{z \in \mathbb{C}: 1/2 \leq z \leq 2\}$$
 by  \cite[Lemma 2.4]{TiozzoGaloisConjugates} 
and that the projection of $\Upsilon_2^{cp}$ onto the first coordinate is $\Omega_2^{cp}$.   
\end{proof}

\begin{proof}[Proof of the Unit Cylinder Theorem \ref{mainthm:unitcylinder}]

As in the proof of Proposition~\ref{p:bottomlevelunitcircle}, let $\beta$ be the slope of a critically periodic tent map such that $\beta$ has a Galois conjugate  $z \in \mathbb{D}$ and for all $k \in \mathbb{N}$, and every $2^k$-th root of $z$ is a Galois conjugate of $\beta^{1\over 2^k}$. Then we have
$$\left \{(z', \beta') \in \mathbb{C} \times \mathbb{R}^+ :  (z ')^{2^k}=z \textrm{ and }\beta'=\beta^{1\over 2^k} \textrm{ for some } k \in \mathbb{N}\right\}\subset \Upsilon_2^{cp}.$$ The Persistence Theorem~\ref{mainthm:closurepersistence} then implies that 
$$
\left \{(z', y) \in \mathbb{C} \times \mathbb{R}^+ : (z')^{2^k}=z, y\geq \beta^{1\over 2^k}\text{ for some } k \in \mathbb{N} 
\right \}\subset \Upsilon_2^{cp},$$ 
Taking the closure of the above set, we obtain that the unit cylinder
$\mathbb{S}^1 \times [1,2]$ is a subset of $\Upsilon_2^{cp}$.
\end{proof}

 \begin{proof}[Proof of the Connectedness Theorem \ref{mainthm:connected}]
Connectivity of the part of the Master Teapot outside of the unit cylinder
is due to Tiozzo \cite{TiozzoGaloisConjugates}.  More specifically, by
\cite[Lemma 7.3]{TiozzoGaloisConjugates}, for any point $(z,\beta) \in
\mathbb{C} \times \mathbb{R}$ such that $\beta$ is the growth rate of a
critically periodic tent map, $z$ is a Galois of $\beta$, and $|z| > 1$,
there exists a continuous path $(\gamma(x),x)$ in $\Upsilon_2^{cp}$
connecting $(z,\beta)$ to a point $(w,1)$ with $w \in S^1$.   Consequently,
since the unit cylinder is a subset of $\Upsilon_2^{cp}$ by the Unit
Cylinder Theorem \ref{mainthm:unitcylinder}, and since $\Upsilon_2^{cp}$ is
closed, this implies  $\Upsilon_2^{cp} \cap (\{z:|z| \geq 1\} \times
\mathbb{R})$ is connected.  By \cite{TiozzoGaloisConjugates}, 
the Thurston Set is connected and contains an open annulus containing $S^1$.  By the Persistence Theorem~\ref{mainthm:closurepersistence}, the projection to $\mathbb{C}$ of part of the top level of the Master Teapot that is inside the unit cylinder agrees with the Thurston Set, i.e. $\Upsilon_2^{cp} \cap (\mathbb{D} \times \{2\}) = (\Omega_2  \cap \mathbb{D}) \times \{2\}$.  Also by the Persistence Theorem~\ref{mainthm:closurepersistence}, the part of the Master Teapot inside the unit cylinder is connected.  Thus, the entire Master Teapot, $\Upsilon_2^{cp}$, is connected.  
 \end{proof}

\section{Gaps in the Thurston set} \label{sec:gaps}

Plots of finite approximations of the Thurston set consisting of the roots of all defining polynomials associated to superattracting tent maps of critical orbit length at most $n$, for fixed $n \in \mathbb{N}$, have  ``gaps" at certain algebraic integers, some of which are on the unit circle.  The Thurston set contains a neighborhood of the unit circle \cite{TiozzoGaloisConjugates}, but these gaps get filled in more slowly with $n$ than some other regions.  
See Figure \ref{fig:thurston_set} for a picture of a finite approximation of the Thurston set, and Figure \ref{fig:gaps} for a closeup of one such gap. In this section, we prove an arithmetic justification for gaps:

\begin{figure}[!h] 
\begin{center}
\includegraphics[width=0.7\linewidth]{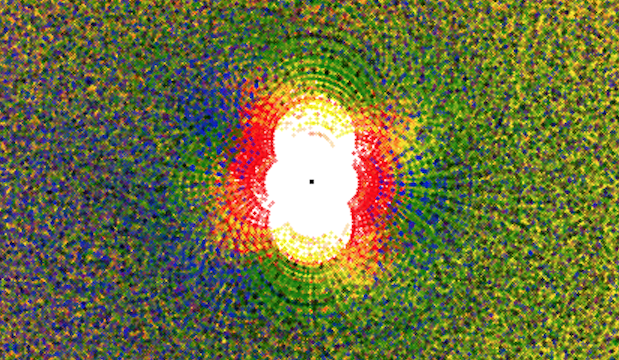}
\caption{A closeup of the how the ``gap'' around the point $i$ fills in as postcritical length increases, for an approximation of the Thurston set. The points are color-coded by the length of the associated post-critical orbit. Blue is the shortest, followed by green, yellow, orange, and finally red with the longest orbit, of length 23.
}
\label{fig:gaps1}
\end{center}
\end{figure}

\begin{theorem} \label{t:gaps}
Let $\alpha$ be an algebraic integer such that $\mathbb{Z}[\alpha]$ is a discrete subgroup of $\mathbb{C}$ and let $x \in \mathbb{Z}[\alpha]$. Set 
$c = \min \{|z| : z \in \mathbb{Z}[\alpha], z \neq 0\}$.
Suppose there exists a superattracting tent map with postcritical length $n$ whose growth rate has a Galois conjugate of the form $x+\epsilon$ for some $\epsilon \in \mathbb{C}$ with $|\epsilon| \leq \frac{1}{n+1}$.  
Then 

\begin{enumerate}
\item if $|x| \geq 1$, then 
$\displaystyle\frac{c}{(2n^2 + 3n+1) |x|^n e} \leq \epsilon.$
\item if $|x| \leq 1$, then 
$\displaystyle\frac{c}{(2n^2+3n+1) e} \leq \epsilon.$
\end{enumerate}
\end{theorem}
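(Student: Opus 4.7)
The proof rests on evaluating the Parry polynomial $P_\beta$ at the point $x$ and exploiting the discreteness of $\mathbb{Z}[\alpha]$. By Definition~\ref{def:ParryPolynomial}, $P_\beta$ is monic of degree $n$ with constant term $-1$ and middle coefficients in $\{-2,0,2\}$, so every coefficient satisfies $|a_j|\leq 2$; and since the Galois conjugates of $\beta$ are roots of the minimal polynomial of $\beta$, which divides $P_\beta$, one has $P_\beta(x+\epsilon)=0$. Because $P_\beta$ has integer coefficients and $x\in\mathbb{Z}[\alpha]$, the value $P_\beta(x)$ lies in $\mathbb{Z}[\alpha]$, and in the generic case $P_\beta(x)\neq 0$ discreteness forces $|P_\beta(x)|\geq c$. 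The degenerate case $P_\beta(x)=0$ means $x$ is itself one of the finitely many roots of $P_\beta$; this will be handled by the analogous argument applied to the quotient $P_\beta(z)/(z-1)$, which also has integer coefficients.

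The central estimate then comes from the integral mean-value identity, which in view of $P_\beta(x+\epsilon)=0$ reads
\[
|P_\beta(x)|\,=\,\left|\epsilon\int_{0}^{1}P_\beta'(x+t\epsilon)\,dt\right|\,\leq\, |\epsilon|\sup_{t\in[0,1]}|P_\beta'(x+t\epsilon)|.
\]
The derivative $P_\beta'$ has degree $n-1$ and coefficients of modulus at most $2n$, and the hypothesis $|\epsilon|\leq 1/(n+1)$ combined with the elementary inequality $(1+1/(n+1))^{n+1}<e$ controls $|x+t\epsilon|$ along the segment from $x$ to $x+\epsilon$. For $|x|\geq 1$ the estimate $|x+t\epsilon|\leq |x|(1+1/(n+1))$ gives $|x+t\epsilon|^{j}\leq e|x|^{j}$ for all $j\leq n$, whence
$|P_\beta'(x+t\epsilon)|\leq 2n\sum_{j=0}^{n-1}e|x|^{j}\leq 2en^{2}|x|^{n-1}$. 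Substituting into the displayed inequality and using $|x|^{n-1}\leq|x|^{n}$ together with $2n^{2}\leq 2n^{2}+3n+1$ delivers statement~(1).

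For statement~(2), when $|x|\leq 1$ a direct application of the same template produces a bound independent of $|x|$, which is weaker than the claim in the intermediate regime $|x|\in[c/((2n+1)e),\,1]$. The missing factor of $|x|$ must be extracted by isolating the $x$-independent portion of the Taylor expansion of $P_\beta(x+\epsilon)-P_\beta(x)$: thanks to $P_\beta(x+\epsilon)=0$ and $P_\beta(0)=-1$, the sum $\sum_{j=1}^{n}a_{j}(x+\epsilon)^{j-1}$ equals $1/(x+\epsilon)$, and combining this identity with the elementary bound $|\sum_{j=1}^{n}a_{j}(x+\epsilon)^{j-1}|\leq 2ne$ yields $|x+\epsilon|\geq 1/(2ne)$. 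Every remaining term in the expansion carries at least one factor of $x$ and contributes a total of order $n^{2}|x||\epsilon|$, and assembling the two contributions with careful constant-chasing produces the claimed inequality. For $|x|$ below $c/((2n+1)e)$ the statement is vacuous: \cite[Lemma~2.4]{TiozzoGaloisConjugates} shows that every Galois conjugate of a growth rate has modulus at least $1/2$, while the hypothesis $|\epsilon|\leq 1/(n+1)$ forces $|x+\epsilon|<1/2$ for such small $|x|$ as soon as $n\geq 2$. The principal technical hurdle is the bookkeeping in~(2), namely arranging the estimates so they combine into precisely $(n+1)(2n+1)=2n^{2}+3n+1$ rather than a slightly larger polynomial in $n$.
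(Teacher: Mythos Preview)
For part~(1) your argument is correct and essentially the same as the paper's. Both use that $P_\beta(x)\in\mathbb{Z}[\alpha]\setminus\{0\}$ forces $|P_\beta(x)|\geq c$ and then bound $|P_\beta(x)-P_\beta(x+\epsilon)|$ from above; you invoke the integral mean-value inequality and estimate $|P_\beta'|$ on the segment, while the paper expands each $(x+\epsilon)^k-x^k$ binomially and uses $\binom{k}{i}|\epsilon|^{i-1}\leq (k/i!)\,(k/(n+1))^{i-1}\leq k/i!$. Either route gives $c\leq (2n^2+3n+1)\,e\,|x|^n\,|\epsilon|$. (Two minor points: the paper's Parry polynomial has degree $n+1$, not $n$, which is harmless here; and your fix for the degenerate case $P_\beta(x)=0$ via $P_\beta(z)/(z-1)$ only helps when the extraneous root is $1$---the paper does not treat this case at all.)

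Part~(2) is where your proposal has a real gap. You correctly see that the method of~(1) yields only a bound of the form $c\leq C\,n^2 e\,|\epsilon|$ with no factor of $|x|$, and you correctly extract $|x+\epsilon|\geq 1/(2ne)$ from $P_\beta(x+\epsilon)=0$. But the passage ``every remaining term \ldots\ carries at least one factor of $x$ \ldots\ careful constant-chasing produces the claimed inequality'' is not a proof: the $x$-free piece of the binomial expansion of $P_\beta(x+\epsilon)-P_\beta(x)$ is $\sum_j a_j\epsilon^j$, which is not the quantity $\sum_j a_j(x+\epsilon)^{j-1}=1/(x+\epsilon)$ you computed, and you never write down an inequality of the shape $c\leq(\text{const})\,|x|\,|\epsilon|$. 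Your vacuity claim for $|x|<c/((2n+1)e)$ is also off, since no nonzero $x\in\mathbb{Z}[\alpha]$ has $|x|<c$ by the definition of $c$. The paper handles~(2) by asserting the pointwise bound $|(x+\epsilon)^k-x^k|\leq |\epsilon|\,k\,|x|\,e$ and summing. A cleaner observation dissolves the difficulty entirely: $\mathbb{Z}[\alpha]$ is a discrete sub\emph{ring} of $\mathbb{C}$, so any nonzero $\zeta$ with $|\zeta|<1$ would have $\zeta^m\to 0$, contradicting discreteness; hence $c=1$, the hypothesis $|x|\leq 1$ forces $|x|=1$, and~(2) reduces to~(1).
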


\begin{proof}
Fix $x \in \mathbb{Z}[\alpha]$ and suppose there exists a real number $\beta$ associated to a generalized PCF $\beta$-map with $m$ intervals and postcritical length $n$ that has a Galois conjugate of the form $x+\epsilon$ for some $\epsilon \in \mathbb{C}$ with $|\epsilon| \leq 1$.   

Then $\beta$ is the root of the associated Parry polynomial $P_{\beta,E}$;
$$0 = z^{n+1}-(a_0z^n + a_1 z^{n-1}+\cdots + a_n) - 1,$$ 
where $a_i \in \{-2,0,2\}$. Hence $(x+\epsilon)$ is also a root of $P_{\beta,E}$:
\begin{equation*} \label{eq:epsilonpoly2} 
0 = (x+\epsilon)^{n+1} - (a_0(x+\epsilon)^n + a_1(x+\epsilon)^{n-1}+ \cdots + a_n)-1.
\end{equation*} 
Therefore
\begin{align*}
	1-x^{n+1}+a_0x^n + \dots +a_n &= (x+\epsilon)^{n+1}-x^{n+1} -\big( a_0((x+\epsilon)^{n}-x^n) \\
	&\qquad + a_1((x+\epsilon)^{n-1}-x^{n-1}) +\dots + a_{n-1}((x+\epsilon)-x)\big).
\end{align*}
We have $1-x^{n+1}+a_0x^n + \dots +a_n \in \mathbb{Z}[\alpha]$, so $c \leq |1-x^{n+1}+a_0x^n + \dots +a_n|$.  Then by the triangle inequality, 

\begin{align}\label{eq:firsttriangleinequality}
\begin{split}
c &\leq |1-x^{n+1}+a_0x^n + \dots +a_n|  \\
& \leq |(x+\epsilon)^{n+1}-x^{n+1}| + |a_0||(x+\epsilon)^{n}-x^n| + |a_1||(x+\epsilon)^{n-1}-x^{n-1}| \dots |a_{n-1}||(x+\epsilon)-x|.
\end{split}
\end{align}

We now restrict to the case $|x| \geq 1$.  
For any $k \leq n+1$, by the binomial theorem, the triangle inequality, and $|\epsilon| \leq \frac{1}{n+1}$, 

\begin{align} \label{eq:biginequalitybigx}
\begin{split}
| (x+\epsilon)^k - x^k | &= \quad\left| \sum_{i=1}^k \begin{pmatrix} k \\ i \end{pmatrix} x^{k-i} \epsilon^i \right| \quad\leq\quad  \sum_{i=1}^k \left| \begin{pmatrix} k \\ i \end{pmatrix} x^{k-i} \epsilon^i \right| \\
&\leq \quad \sum_{i=1}^k \left| \frac{k^i}{(k-i)!} x^{k-i} \frac{1}{(n+1)^{i-1}} \epsilon   \right| \quad=\quad \sum_{i=1}^k \left| \left( \frac{k}{n+1} \right)^{i-1} \frac{k}{(k-i)!} \ \epsilon \ x^{k-i}\right| \\
&\leq\quad \epsilon k |x|^{k-1} \sum_{i=1}^k \frac{1}{(k-i)!}  \quad=\quad  \epsilon k |x|^{k-1}  \sum_{i=0}^{k-1} \frac{1}{i!} \\
&\leq \quad \epsilon k |x|^{k-1}  \sum_{i=0}^{\infty} \frac{1}{i!} \quad=\quad \epsilon k |x|^{k-1} e.
\end{split}
\end{align}

\noindent Combining equations (\ref{eq:firsttriangleinequality}) and (\ref{eq:biginequalitybigx}) yields

\begin{align*} 
\begin{split}
c & \leq  \epsilon (n+1) e |x|^n + 
|a_0| \epsilon n e |x|^{n-1}
+ \dots + 
|a_{n-1}| \epsilon 1 |x|^0 e| \\
& \leq  \epsilon (n+1) e |x|^n \left(1 + |a_0| + \dots + |a_{n-1}| \right) \\
& \leq  \epsilon (n+1) e |x|^n \left(1 +2n \right). \\
\end{split} 
\end{align*}

\noindent Thus for $|x|\ge 1$,
$$\frac{c}{e (1+2n) (n+1) |x|^n} \leq \epsilon.$$

We now restrict to the case $|x| \leq 1$.  In this case, the estimate (\ref{eq:biginequalitybigx}) becomes 
\begin{equation} \label{eq:littlexcase}
  \left|(x+\epsilon)^k - x^k\right| \leq 
  \epsilon ke.
\end{equation}
Combining equations (\ref{eq:firsttriangleinequality}) and (\ref{eq:littlexcase}) yields
$$c \leq \epsilon (n+1) e(1+|a_0| + |a_1| + \dots +|a_{n-1}|) \leq  \epsilon (n+1) e(1+2n) .$$ 
Hence, for $|x| \le 1$, 
$$ \frac{c}{(n+1)(1+2n) e} \leq \epsilon.$$
\end{proof}

\begin{proof}[Proof of the Gap Theorem \ref{mainthm:gaps}]
In view of Theorem \ref{t:gaps}, it suffices to classify the discrete subgroups of $\mathbb{C}$.
The classification of discrete subrings of $\mathbb{C}$ is well-known, and
we include it for completeness: firstly, because it is a discrete additive
subgroup, it is either $\mathbb{Z}$ or a lattice of rank $2$. If it is the
latter case, let $\{1, a\}$ be a basis of the lattice, then $a$ must be an
algebraic integer of degree $2$, in other words, the discrete subring must be of the form $\mathbb{Z}[a]$ where $a$ is an
algebraic integer of degree 2, hence it must be contained in the ring of integers of an algebraic field of degree 2. There are only 4 such rings of integers that contains some element not on the real line and has absolute value less than 2, which are
 $\mathbb{Z}[\sqrt{-1}]$, $\mathbb{Z}[\sqrt{-2}]$, $\mathbb{Z}[\sqrt{-5}]$, or
$\mathbb{Z}[\frac{1+\sqrt{-3}}2]$. 
\end{proof}

\section{$\Omega_2^{cp}$ and $\Omega_2^{pcf}$ are not equal}\label{sec:preperiodic}

In this section we prove the Two Thurston Sets Theorem~\ref{mainthm:prepernotequal}, that $\Omega_2^{cp}$ and $\Omega_2^{pcf}$ are not equal. A finite approximation of $\Omega_2^{cp}$ is shown in Figure \ref{fig:thurston_set}, and a finite approximation of $\Omega_2^{pcf}$ is shown in Figure \ref{fig:preperiodic}.

\begin{figure}[!hb]
\begin{center}
\includegraphics[width=\linewidth,trim={15cm 19cm 10cm 19cm},clip]{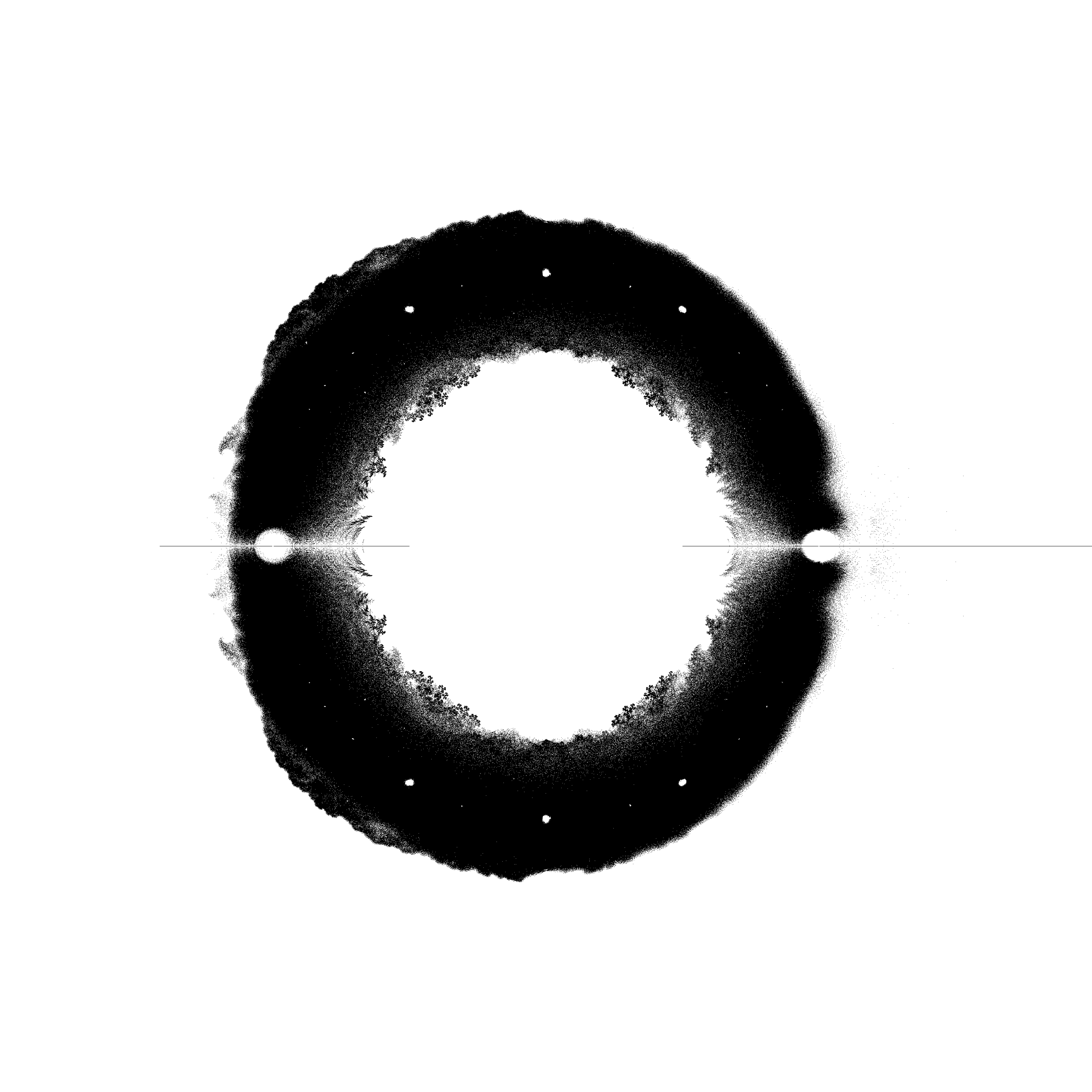}
\caption{An approximation of the preperiodic Thurston set, $\Omega_2^{pcf}$, consisting of the roots of all minimal polynomials associated to postcritically finite tent maps for which the sum of the pre-critical length and the period is at most 22. This set is shown in red in Figure \ref{fig:overlay}. Compare this with the Thurston set $\Omega_2^{cp}$ in Figure \ref{fig:thurston_set}, and note in particular the difference in a large neighborhood of the point 1.
}
\label{fig:preperiodic}
\end{center}
\end{figure}

As outlined in section \S \ref{ss:IFSdescription}, a point $z \in \mathbb{D}$ is in $\Omega_2^{cp}$ if and only if $0$ is in the limit set of the iterated function system generated by $f_z,g_z$, where
 $$f_z:x \mapsto zx+1, \quad g_z:x \mapsto zx-1.$$ 
Denote the alphabet $\{f_z,g_z\}$ by $\mathcal{F}_z$ and denote the alphabet of inverses $\{f^{-1}_z,g^{-1}_z\}$ by $\mathcal{F}^{-1}_z$.
 For a word $w=w_1,\dots,w_n$ in the alphabet $\mathcal{F}_z$ or in the alphabet $\mathcal{F}^{-1}_z$, define the action of $w$ on $\mathbb{C}$ by 
$$w(x) = w_n \circ \dots \circ w_1 (x).$$

\begin{lemma} \label{l:notinThurstonSetCriterion}
Fix $z \in \mathbb{D} \setminus \{0\}$.  If there exists $n \in \mathbb{N}$ such that 

$$\min \left \{ |v(0)| : v \in (\mathcal{F}^{-1}_z)^n \right \} > \frac{1}{1-|z|},$$
then $z \not \in \Omega_2^{cp}$.
\end{lemma}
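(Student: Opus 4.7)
The plan is to argue by contrapositive via the iterated function system characterization of $\Omega_2 \cap \mathbb{D}$ recalled in \S \ref{ss:IFSdescription}: a point $z \in \mathbb{D}$ belongs to $\Omega_2$ if and only if $0$ lies in the limit set $\Lambda_z$ of the IFS generated by $f_z$ and $g_z$. Combined with Lemma \ref{l:boundsonlimitset}, which gives $\Lambda_z \subset B_{1/(1-|z|)}(0)$, we will reach a contradiction with the stated hypothesis.

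First I would suppose for contradiction that $z \in \Omega_2$, so that $0 \in \Lambda_z$. Iterating the self-similarity relation $\Lambda_z = f_z(\Lambda_z) \cup g_z(\Lambda_z)$ a total of $n$ times yields
\[
\Lambda_z = \bigcup_{w \in \mathcal{F}_z^n} w(\Lambda_z).
\]
Hence there exist a word $w = w_1 \cdots w_n$ in the alphabet $\mathcal{F}_z$ and a point $x \in \Lambda_z$ with $w(x) = w_n \circ \cdots \circ w_1 (x) = 0$. Each of the maps $f_z$ and $g_z$ is an invertible affine map on $\mathbb{C}$, so $x$ is uniquely determined by $x = w_1^{-1} \circ \cdots \circ w_n^{-1}(0) = v(0)$, where $v = v_1 \cdots v_n \in (\mathcal{F}_z^{-1})^n$ is the word with $v_j = w_{n+1-j}^{-1}$.

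Since $x \in \Lambda_z$, Lemma \ref{l:boundsonlimitset} gives
\[
|v(0)| = |x| \le \frac{1}{1-|z|}.
\]
This contradicts the assumption that every $v \in (\mathcal{F}_z^{-1})^n$ satisfies $|v(0)| > \frac{1}{1-|z|}$. Therefore $z \notin \Omega_2$.

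No step of this argument is particularly delicate; the only thing to watch is the bookkeeping of composition order when identifying preimages of $0$ under length-$n$ words in $\mathcal{F}_z$ with the values $v(0)$ for length-$n$ words $v$ in the inverse alphabet $\mathcal{F}_z^{-1}$. All the substantive content sits in Lemma \ref{l:boundsonlimitset} and in Tiozzo's IFS characterization of $\Omega_2 \cap \mathbb{D}$, both of which are already available to us.
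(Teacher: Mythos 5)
Your proof is correct and follows essentially the same route as the paper: assume $0 \in \Lambda_z$, iterate the self-similarity relation $n$ times, and use Lemma \ref{l:boundsonlimitset} to force some $v(0)$ with $v \in (\mathcal{F}_z^{-1})^n$ into $B_{1/(1-|z|)}(0)$, contradicting the hypothesis. Your careful bookkeeping of the composition order (matching the paper's convention $w(x)=w_n\circ\cdots\circ w_1(x)$) is accurate and in fact slightly more explicit than the paper's own writeup.
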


\begin{proof}
Suppose $z \in \mathbb{D} \cap \Omega_2^{cp}$.  Then $0$ is in the limit set $\Lambda_z$.  
Since $\Lambda_z = f_z(\Lambda_z) \cup g_z(\Lambda_z)$, it follows that $\Lambda_z$ is fixed by taking the union of the images of $\Lambda_z$ under all words of length $n$, for any $n \in \mathbb{N}$:
$$\Lambda_z = \bigcup_{w \in (\mathcal{F}_z)^n} w(\Lambda_z).$$ Hence, for
any $n \in \mathbb{N}$, each point in $\Lambda_z$ is the image of a point
$\Lambda_z$ under some word in $\mathcal{F}_z$ of length $n$.  In
particular, $0$ is the the image of a point in $\Lambda_z$ under some word
in $\mathcal{F}_z$ 
of length $n$.  Since $\Lambda_z \subset B_{\frac{1}{1-|z|}}(0)$ by Lemma \ref{l:boundsonlimitset}, this implies that for any $n \in \mathbb{N}$,
$$ \left( \bigcup_{v \in (\mathcal{F}^{-1}_z)^n} v(0) \right) \cap B_{\frac{1}{1-|z|}}(0) \neq \emptyset.$$

\end{proof}

\begin{proof}[Proof of the Two Thurston Sets Theorem
  \ref{mainthm:prepernotequal}]
  Let $\beta$ be the leading root of the polynomial
  \[P(x)=x^{12} - 2x^{11} + x^{10} - 2x^9 + x^8 - 2x^7 + 2x^6 - 2x^5 + 4x^4 - 2x^3 + 4x^2 - 4x + 2.\]
  (The value of $\beta$ is approximately $1.94848$.) 
  By computation, the minimal $\beta$-itinerary is
  $$w=1000011100(101000)^\infty.$$

  Because $P$ is irreducible, any roots of $P$ must be in $\Omega^{pcf}$. 
Let $p$ be the root of $P$ with approximate value  
$$p \approx 0.5393738531461442 + 0.4050155839374199i.$$
Since $|p|$ is approximately $0.674509$, $p \in \mathbb{D} \cap \Omega_2^{pcf}$.  

 Let $\mathcal{F}^{-1}_p$ be the alphabet consisting of the two maps $f_p^{-1}$ and $g_p^{-1}$, where
$$f_p^{-1}:x \mapsto \frac{x-1}{p}, \quad g_p^{-1}:x \mapsto \frac{x+1}{p}.$$
Computation shows that
$$\min \left \{ |v(0)| : v \in (\mathcal{F}^{-1}_p)^5 \right \} \approx 4.3792,$$
which is much bigger than $\frac{1}{1-|p|} \approx 3.07228$. Consequently, Lemma \ref{l:notinThurstonSetCriterion}  implies that $p \not \in \Omega_2^{cp}$.

\end{proof}

\bibliographystyle{alpha}
\bibliography{references}

\noindent Harrison Bray, University of Michigan, Department of Mathematics, 530 Church Street, Ann Arbor MI, \mbox{\url{hbray@umich.edu}}

\noindent Diana Davis, Swarthmore College, Department of Mathematics and Statistics, 500 College Avenue, Swarthmore PA, \mbox{\url{dianajdavis@gmail.com}}

\noindent Kathryn Lindsey, Boston College,  Department of Mathematics, Maloney Hall, Fifth Floor, Chestnut Hill, MA, \mbox{\url{kathryn.a.lindsey@gmail.com}}

\noindent Chenxi Wu, Rutgers University, Department of Mathematics, 110 Frelinghuysen Road, Piscataway, NJ, \mbox{\url{wuchenxi2013@gmail.com}}

\end{document}